\newfont{\teneufm}{eufm10}
\newfont{\seveneufm}{eufm7}
\newfont{\fiveeufm}{eufm5}
 \patchcmd\Gread@eps{\@inputcheck#1 }{\@inputcheck"#1"\relax}{}{}
\newtheorem{thm}{Theorem}
\newtheorem{lem}[thm]{Lemma}
\newtheorem{conjecture}[thm]{Conjecture}
\newcommand{\Tr}{{\rm Tr}}
\newcommand{\cB}{\mathscr{B}}
\def\+{\oplus}
\def\cB{{\mathcal B}}
\def\F{{\mathbb F}}
\def\Fn{{\mathbb{F}_{p^n}}}
\def\00{{\bf 0}}
\def\11{{\bf 1}}
\def\+{\oplus}
\def\\{\cr}
\def\({\left(}
\def\){\right)}
\newcommand{\cardinality}[1]{\# #1}
\providecommand{\newoperator}[3]{%
  \newcommand*{#1}{\mathop{#2}#3}}
\newoperator{\FD}{\mathrm{FD}}{\nolimits}
\begin{document}
\title{\bf Low $c$-differential and $c$-boomerang uniformity of the swapped inverse function}
\author{Pantelimon~St\u anic\u a  \\ 
Applied Mathematics Department, \\
Naval Postgraduate School, Monterey, USA. \\
E-mail: pstanica@nps.edu}

\maketitle

\begin{abstract}
Modifying the binary inverse function in a variety of ways, like swapping two output points has been known to produce a $4$-differential uniform permutation function. Recently, in~\cite{Li19} it was shown that this swapped version of the inverse function has boomerang uniformity exactly $10$, if $n\equiv 0\pmod 6$, $8$,  if $n\equiv 3\pmod 6$, and 6,  if $n\not\equiv 0\pmod 3$.  Based upon the $c$-differential notion we defined in~\cite{EFRST20} and $c$-boomerang uniformity from~\cite{S20}, in this paper we characterize the $c$-differential and $c$-boomerang uniformity for the $(0,1)$-swapped  inverse function in characteristic~$2$: we show that for all~$c\neq 1$, the $c$-differential uniformity is upper bounded by~$4$ and the $c$-boomerang uniformity by~$5$ with both bounds being attained for~$n\geq 4$.
\end{abstract}
{\bf Keywords:} 
Boolean, 
$p$-ary functions, 
$c$-differentials,  
differential uniformity,
boomerang uniformity,
perfect and almost perfect $c$-nonlinearity
\newline
{\bf MSC 2000}: 06E30, 11T06, 94A60, 94D10.


\section{Introduction and basic definitions}

 The boomerang attack on block ciphers developed by Wagner~\cite{Wag99} was quickly picked by several works~\cite{BK99,KKS00,BDK02,Kim12} in the applied realm, but it took almost ten years until  Cid et al.~\cite{Cid18} came up with a theoretical tool called  the Boomerang Connectivity Table (BCT), and further Boura and Canteaut~\cite{BC18} defined the boomerang uniformity to theoretical investigate the functions' resistance to the boomerang attack.
 Using  a multiplier differential we introduced in~\cite{EFRST20},  we extended the notion of  the $c$-Boomerang Connectivity Table ($c$-BCT) and $c$-boomerang uniformity  in~\cite{S20}.
 
  Here, we continue the work by investigating the well-known swapped inverse, proposed in~\cite{Yu13}, which was the subject of many papers since then (see~\cite{CV19,QT13,QT16,TCT14,Za14}, to cite just a few papers; a generalization allowing the modification of any two output values is done in \cite{Kal2019}).

As customary,  we let:
\begin{enumerate}
\item  $m,n$ are positive integers  and $p$ a prime number, $\cardinality{S}$  denotes the cardinality of a set $S$ and $\bar z$ is the complex conjugate;
\item  $\F_{p^n}$ is the  finite field with $p^n$ elements, and $\F_{p^n}^*=\F_{p^n}\setminus\{0\}$ is the multiplicative group (for $a\neq 0$, we often write $\frac{1}{a}$ to mean the inverse of $a$ in the multiplicative group);
\item  We let $\F_p^n$ be the $n$-dimensional vector space over $\F_p$.  Next,
$f:\F_{p^n}$ (or $\F_p^n$) to $\F_p$  is a {\em $p$-ary  function} on $n$ variables. Any map $F:\F_{p^n}\to\F_{p^m}$ (or, $\F_p^n\to\F_p^m$)  is called a {\em vectorial $p$-ary  function}, or {\em $(n,m)$-function}. 
\item When $m=n$, $F$ can be uniquely represented as a univariate polynomial over $\F_{p^n}$ (using some identification, via a basis, of the finite field with the vector space) of the form
$
F(x)=\sum_{i=0}^{p^n-1} a_i x^i,\ a_i\in\F_{p^n},
$
whose {\em algebraic degree}   is then the largest Hamming weight of the exponents $i$ with $a_i\neq 0$. 
\item  
Given a $p$-ary  function $f$, the derivative of $f$ with respect to~$a \in \F_{p^n}$ is the $p$-ary  function
$
 D_{a}f(x) =  f(x + a)- f(x), \mbox{ for  all }  x \in \F_{p^n},
$
which can be naturally extended to vectorial $p$-ary functions.
\item
While the next concept can be defined for general $(n,m)$-functions, in this paper we consider $m=n$.
For an $(n,n)$-function $F$, and $a,b\in\F_{p^n}$, we let $\Delta_F(a,b)=\cardinality{\{x\in\F_{p^n} : F(x+a)-F(x)=b\}}$. We call the quantity
$\delta_F=\max\{\Delta_F(a,b)\,:\, a,b\in \F_{p^n}, a\neq 0 \}$ the {\em differential uniformity} of $F$. If $\delta_F= \delta$, then we say that $F$ is differentially $\delta$-uniform. 
\item If $\delta=1$, then $F$ is called a {\em perfect nonlinear} ({\em PN}) function, or {\em planar} function. If $\delta=2$, then $F$ is called an {\em almost perfect nonlinear} ({\em APN}) function. It is well known that PN functions do not exist if $p=2$.
\end{enumerate}
The reader can consult~\cite{Bud14,CH1,CH2,CS17,MesnagerBook,Tok15} for more on  Boolean and $p$-ary functions.

The initial concept of boomerang uniformity was defined for permutations (of course, proper $S$-boxes)  in the following way.
Let $F$ be a permutation on $\F_{2^n}$ and $(a,b)\in\F_{2^n}\times \F_{2^n}$. One defines the entries of the {\em Boomerang Connectivity Table} \textup{(}{\em BCT}\textup{)} by
\[
\cB_F(a,b)=\cardinality \{x\in\F_{2^n}|F^{-1} (F(x)+b)+F^{-1}(F(x+a)+b)=a  \},
\]
where $F^{-1}$ is the compositional inverse of $F$. The {\em boomerang uniformity} of $F$ is defined as
\[
\beta_F=\max_{a,b\in\Fn*} \cB_F(a,b).
\]
We also say that $F$ is a $\beta_F$-uniform BCT function. Surely, $\Delta_F(a,b)=0,2^n$ and $\cB_F(a,b)=2^n$  whenever $ab=0$. We know that $\delta_F=\delta_{F^{-1}}$, $\beta_F=\beta_{F^{-1}}$, and  for permutations, $\beta_F\geq \delta_F$ and they are equal for APN permutations.  Further, from~\cite{Li19} we know that  for quadratic permutations,
$
\delta_F\leq \beta_F\leq \delta_F(\delta_F-1).
$
 
 We mention here that this concept was quickly picked up and studied in~\cite{BC18, BPT19, CV19,Li19, LiHu20, Mes19,TX20}, to mention just a few recent papers.  
 
  Li et al.~\cite{Li19} (see also~\cite{Mes19}) observed that
\begin{equation*}
\begin{split}
\label{eq:boom-diff}
\cB_F(a,b)&=\cardinality \left\{ (x,y)\in\Fn\times \Fn\,\Large\big|\, \substack{F(x)+F(y)=b \\  F(x+a)+F(y+a)=b } \right\}, \\
 &\stackrel{y=x+\gamma}{=}\cardinality \left\{ (x,\gamma)\in\Fn\times \Fn\,\Large\big|\, \substack{F(x+\gamma)+F(x)=b \\ F(x+\gamma+a)+F(x+a)=b }   \right\}\\
& =\sum_{\gamma\in\Fn} \cardinality  \left\{ x \in\Fn \,\big|\, D_\gamma F(x)=b \text{ and }  D_\gamma F(x+a)=b   \right\},
\end{split}
\end{equation*}
which allowed the concept to be extended to non-permutations, since it avoids the inverse of~$F$.

We extended this notion recently in~\cite{S20} to $c$-boomerang connectivity table, based upon our prior~\cite{EFRST20} $c$-differential concept (see~\cite{RS20,YZ20} for more work related to it).
 
For a $p$-ary $(n,m)$-function   $F:\F_{p^n}\to \F_{p^m}$, and $c\in\F_{p^m}$, the ({\em multiplicative}) {\em $c$-derivative} of $F$ with respect to~$a \in \F_{p^n}$ is the  function
\[
 _cD_{a}F(x) =  F(x + a)- cF(x), \mbox{ for  all }  x \in \F_{p^n}.
\]

For an $(n,n)$-function $F$, and $a,b\in\F_{p^n}$, we let the entries of the $c$-Difference Distribution Table ($c$-DDT) be defined by ${_c\Delta}_F(a,b)=\cardinality{\{x\in\F_{p^n} : F(x+a)-cF(x)=b\}}$. We call the quantity
\[
\delta_{F,c}=\max\left\{{_c\Delta}_F(a,b)\,|\, a,b\in \F_{p^n}, \text{ and } a\neq 0 \text{ if $c=1$} \right\}\]
the {\em $c$-differential uniformity} of~$F$. If $\delta_{F,c}=\delta$, then we say that $F$ is differentially $(c,\delta)$-uniform (or that $F$ has $c$-uniformity $\delta$, or for short, {\em $F$ is $\delta$-uniform $c$-DDT}). If $\delta=1$, then $F$ is called a {\em perfect $c$-nonlinear} ({\em PcN}) function (certainly, for $c=1$, they only exist for odd characteristic $p$; however, as proven in~\cite{EFRST20}, there exist PcN functions for $p=2$, for all  $c\neq1$). If $\delta=2$, then $F$ is called an {\em almost perfect $c$-nonlinear} ({\em APcN}) function. 
When we need to specify the constant $c$ for which the function is PcN or APcN, then we may use the notation $c$-PN, or $c$-APN.
It is easy to see that if $F$ is an $(n,n)$-function, that is, $F:\F_{p^n}\to\F_{p^n}$, then $F$ is PcN if and only if $_cD_a F$ is a permutation polynomial.

Further, 
for an $(n,n)$-function $F$, $c\neq 0$, and $(a,b)\in\Fn\times \Fn$,  we define the {\em $c$-Boomerang Connectivity Table}  \textup{(}$c$-BCT\textup{)} entry at $(a,b)$ to be
{\small
\begin{equation*}
\label{eq:originalBCT}
  _c\cB_F(a,b)=\cardinality \left\{ x\in\Fn\,\Big|\,  F^{-1}(c^{-1} F(x+a)+b) -F^{-1}(cF(x)+b)=a \right\}.
\end{equation*}
}
Further, the {\em $c$-boomerang uniformity} of $F$ is defined by
\[
\beta_{F,c}=\max_{a,b\in\Fn*} {_c}\cB_F(a,b).
\]
If $\beta_{F,c}=\beta$, we also say that $F$ is a $\beta$-uniform $c$-BCT function.
We showed in~\cite{S20} that we can avoid inverses, thus   allowing the definition to be extended to all $(n,m)$-function, not only permutations. Precisely,   the entries of the $c$-Boomerang Connectivity Table at $(a,b)\in\F_{p^n}\times\F_{p^n}$  can be given by
\begin{align*}
_c\cB_F(a,b)&=\cardinality \left\{ (x,\gamma)\in\Fn\times \Fn\,\Big|\, \Large\substack{F(x+\gamma)-cF(x) =b \\  F(x+\gamma+a)- c^{-1} F(x+a)=b }  \right\}\\
& =\sum_{\gamma\in\Fn} \cardinality  \left\{ x \in\Fn \,\big|\, \Large \substack{_cD_\gamma F(x)=b \text{ and }  _{c^{-1}}D_\gamma F(x+a)=b \\
\text{the $c$-boomerang system}}  \right\}.
\end{align*}

  Besides various connections between $c$-DDT and $c$-BCT and characterizations via Walsh transforms, we investigated some of the known perfect nonlinear and the inverse function in all characteristics, in~\cite{S20}. For example, we showed that in general, if $F(x)=x^{2^n-2}$ on $\F_{2^n}$, ${_c}\cB_F(a,b)\leq 3$, and if $F(x)=x^{p^n-2}$ on $\F_{p^n}$ ($p$ odd), ${_c}\cB_F(a,b)\leq 4$ and gave complete conditions when the upper bound happens.
    It is the purpose of this paper to investigate both the $c$-DDT and $c$-BCT for the $(0,1)$-output swapped inverse function in the binary case (which is  a $4$-uniform DDT permutation function occurring in many papers~\cite{Li13,Li19,PT17,QT13,QT16}, to cite just a few works).
  
  The rest of the paper is organized as follows.  Section~\ref{sec4} and~\ref{sec5} deal  with $c$-differential uniformity, respectively, $c$-boomerang uniformity of the swapped function. Section~\ref{sec6}   concludes the paper.

\section{The $c$-differential uniformity of the swapped inverse function}
 \label{sec4}
 
  We will be using  throughout Hilbert's Theorem 90 (see~\cite{Bo90}), which states that if $\mathbb{F}\hookrightarrow \mathbb{K}$  is a cyclic Galois extension and $\sigma$ is a generator of the Galois group ${\rm Gal}(\mathbb{K}/\mathbb{F})$, then for $x\in \mathbb{K}$, the relative trace $\Tr_{\mathbb{K}/\mathbb{F}}(x)=0$ if and only if $x=\sigma(y)-y$, for some $y\in\mathbb{K}$.
We also need the following two lemmas.  
\begin{lem} 
\label{lem10} 
Let $n$ be a positive integer. We have:
\begin{enumerate}
 \item[$(i)$] The equation
$x^2 + ax + b = 0$, with $a,b\in\F_{2^n}$, $a\neq 0$,
has two solutions in $\F_{2^n}$ if  $\Tr\left(
\frac{b}{a^2}\right)=0$, and zero solutions otherwise \textup{(}see~\textup{\cite{BRS67}}\textup{)}.
\item[$(ii)$]  The equation
$x^2 + ax + b = 0$, with $a,b\in\F_{p^n}$, $p$ odd,
has (two, respectively, one) solutions in $\F_{p^n}$ if and only if the discriminant $a^2-4b$ is a (nonzero, respectively, zero) square in $\F_{p^n}$.
\end{enumerate}
\end{lem}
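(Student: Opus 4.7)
The plan is to reduce each equation to a normal form through a linear substitution and then invoke the appropriate classical description of the image of the relevant additive or multiplicative map.

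For part $(i)$, in characteristic $2$ with $a\neq 0$, I would substitute $x = a\,y$, which transforms $x^2 + ax + b = 0$ into $y^2 + y + b/a^2 = 0$. The map $L(y) = y^2 + y$ is $\F_2$-linear on $\F_{2^n}$ with kernel $\{0,1\}$, so it is exactly two-to-one onto its image. Applying Hilbert's Theorem 90 (as recalled immediately before the lemma) with $\K = \F_{2^n}$, $\F = \F_2$, and generator $\sigma(y) = y^2$ of the Galois group, the image of $L$ coincides with the kernel of the absolute trace $\Tr$. Consequently, the reduced equation has a solution iff $\Tr(b/a^2) = 0$, and in that case the fiber has exactly two elements, giving either $2$ or $0$ solutions of the original equation.

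For part $(ii)$, in odd characteristic, $2$ is invertible, so I would complete the square via $x = y - a/2$, reducing the equation to $y^2 = (a^2 - 4b)/4$. The squaring map on $\F_{p^n}^*$ is two-to-one onto the nonzero squares, so this equation admits two solutions exactly when $a^2-4b$ is a nonzero square, a single solution $y=0$ exactly when $a^2-4b=0$, and no solution when $a^2-4b$ is a non-square. Translating back via $x = y - a/2$ gives the claimed characterization.

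Neither part presents a genuine obstacle; both are classical (as reflected by the reference \cite{BRS67} for $(i)$). The only subtlety worth double-checking is the application of Hilbert 90 in $(i)$: one needs that $\sigma(y)=y^2$ generates $\mathrm{Gal}(\F_{2^n}/\F_2)$ and that the relative trace with respect to this generator coincides with the usual absolute trace $\Tr$ used in the statement, so that "being of the form $\sigma(y)-y = y^2+y$" literally matches "having trace zero." Once this identification is in place, both reductions conclude the proof in a few lines.
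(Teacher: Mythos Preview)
Your proof is correct and entirely standard; note, however, that the paper does not actually prove this lemma but merely states it as a known result (citing \cite{BRS67} for part~$(i)$, with part~$(ii)$ being the classical quadratic-formula criterion), so there is no proof in the paper to compare against. Your argument via the substitution $x=ay$ and Hilbert's Theorem~90 for $(i)$, and completing the square for $(ii)$, is precisely the expected justification.
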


\begin{lem}[\textup{\cite{EFRST20}}]
\label{lem:gcd}
Let $p,k,n$ be integers greater than or equal to $1$ (we take $k\leq n$, though the result can be shown in general). Then
\begin{align*}
&  \gcd(2^{k}+1,2^n-1)=\frac{2^{\gcd(2k,n)}-1}{2^{\gcd(k,n)}-1},  \text{ and if  $p>2$, then}, \\
& \gcd(p^{k}+1,p^n-1)=2,   \text{ if $\frac{n}{\gcd(n,k)}$  is odd},\\
& \gcd(p^{k}+1,p^n-1)=p^{\gcd(k,n)}+1,\text{ if $\frac{n}{\gcd(n,k)}$ is even}.\end{align*}
Consequently, if either $n$ is odd, or $n\equiv 2\pmod 4$ and $k$ is even,   then $\gcd(2^k+1,2^n-1)=1$ and $\gcd(p^k+1,p^n-1)=2$, if $p>2$.
\end{lem}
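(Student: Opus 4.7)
The core idea is to combine the factorization $(p^k+1)(p^k-1)=p^{2k}-1$ with the classical identity $\gcd(p^a-1,p^b-1)=p^{\gcd(a,b)}-1$. Taking the gcd of each side of the factorization with $p^n-1$ yields
\[
\gcd\bigl((p^k+1)(p^k-1),\,p^n-1\bigr)=p^{\gcd(2k,n)}-1.
\]
Writing $d=\gcd(k,n)$, $n=dm$, $k=d\ell$ with $\gcd(m,\ell)=1$, a quick parity check shows that $\gcd(2k,n)=d$ when $m$ is odd, and $\gcd(2k,n)=2d$ when $m$ is even, in which case $\ell$ is forced to be odd.

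For $p=2$ the factors $p^k+1$ and $p^k-1$ are coprime, so the displayed gcd splits multiplicatively:
\[
\gcd(2^k+1,2^n-1)\cdot\gcd(2^k-1,2^n-1)=2^{\gcd(2k,n)}-1.
\]
Dividing by $\gcd(2^k-1,2^n-1)=2^{\gcd(k,n)}-1$ gives the first formula immediately.

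For odd $p$ the split fails since $\gcd(p^k+1,p^k-1)=2$, so I would proceed by the parity of $m$. If $m$ is odd, every common divisor of $p^k+1$ and $p^n-1$ divides $p^{\gcd(2k,n)}-1=p^d-1$, hence divides $p^k-1$, hence divides $\gcd(p^k+1,p^k-1)=2$; since both $p^k+1$ and $p^n-1$ are even, the gcd is exactly $2$. If $m$ is even (and $\ell$ odd), the elementary facts $x+1\mid x^\ell+1$ and $x+1\mid x^m-1$ evaluated at $x=p^d$ show that $p^d+1$ divides both $p^k+1$ and $p^n-1$, so $p^d+1\mid g$ where $g=\gcd(p^k+1,p^n-1)$. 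For the reverse divisibility, I would factor
\[
p^k+1=(p^d+1)\sum_{j=0}^{\ell-1}(-1)^j p^{d(\ell-1-j)}
\]
and reduce the alternating sum modulo $p^d-1$ (where $p^d\equiv 1$) to obtain the residue $1$; combined with $g\mid (p^d-1)(p^d+1)$, this forces $g=p^d+1$.

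The consequences follow by direct parity: if $n$ is odd, or if $n\equiv 2\pmod 4$ and $k$ is even, then $\gcd(2k,n)=\gcd(k,n)$, so the first formula collapses to $\gcd(2^k+1,2^n-1)=1$; the same hypotheses force $m=n/d$ odd, so for odd $p$ the case analysis above yields $\gcd(p^k+1,p^n-1)=2$. The main obstacle I expect is the odd-$p$, $m$-even case, where one must argue that no extra factor coming from $p^d-1$ slips into the gcd; the alternating-sum reduction modulo $p^d-1$ is the cleanest way to close this, and is the step that actually uses the odd-$\ell$ condition.
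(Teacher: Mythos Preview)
The paper does not supply a proof of this lemma; it is quoted verbatim from the reference \cite{EFRST20} and used only as background, so there is nothing in the paper to compare your argument against. Your proof is correct and self-contained: the $p=2$ case follows cleanly from the coprimality of $2^k\pm1$ and the identity $\gcd(2^a-1,2^b-1)=2^{\gcd(a,b)}-1$; for odd $p$ your parity split on $m=n/\gcd(k,n)$ is the standard route, and the step you flagged as the potential obstacle (showing that no extra divisor of $p^d-1$ enters the gcd when $m$ is even) is handled correctly by writing $g=(p^d+1)t$, deducing $t\mid p^d-1$ and $t\mid Q$, and using $Q\equiv 1\pmod{p^d-1}$.
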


Given $F:\F_{p^n}\to\F_{p^n}$, and two points $x_0\neq x_1$ in $\F_{p^n}$, we let  $G_{x_0x_1}$ be the $\{x_0,x_1\}$-swapping of $F$ defined by
\begin{equation*}
G_{x_0x_1}(x)=F(x)+\left( (x+x_0)^{p^n-1}+(x+x_1)^{p^n-1}\right)(y_0+y_1),
\end{equation*}
where $y_0=F(x_0),y_1=F(x_1)$. We will sometimes denote $G_{x_0x_1}$ simply by $G$ if there is no danger of confusion.  In this paper we consider the $(0,1)$-swapping of the inverse only, so, 
$G(x)=x^{p^n-2}+x^{p^n-1}+(x+1)^{p^n-1}$.

 We now deal with the binary swapping of the inverse function, and its $c$-differential  uniformity.  There is no need to consider $c=0$ for the $c$-differential uniformity, since we already know that $G$ is a permutation. Thus, in this paper we consider $c\neq 0,1$. We will be using the notation $i\mapsto {_c}\Delta_G(a,b)$ to mean a contribution of $i$ will be added to ${_c}\Delta_G(a,b)$.
 \begin{thm}
 Let $n\geq 2$ be a positive integer, $0,1\neq c\in\F_{2^n}$ and $F:\F_{2^n}\to\F_{2^n}$ be the inverse function defined by $F(x)=x^{2^n-2}$ and $G$ be its $(0,1)$-swapping. 
 If $n=2$,   then ${_c}\Delta_G(a,b)\leq 1$;  if $n=3$,    ${_c}\Delta_G(a,b)\leq 3$.
 If $n\geq 4$, and 
 for any $a,b\in\F_{p^n}$,  the $c$-DDT entries satisfy ${_c}\Delta_G(a,b)\leq 4$ \textup{(}all $[1,2,3,4]$ $c$-DDT entries occur\textup{)}. Furthermore, ${_c}\Delta_G(a,b)=4$ \textup{(}so, the $c$-differential uniformity of $G$ is $\delta_{F,c}=4$\textup{)} if and only if any of the conditions happen:
\begin{enumerate}
\item[$(i)$] For $a\in\F_{2^n}^*$ with $\Tr\left(\frac{a}{a+1} \right)=0$,  $b=\frac{1}{a+1}$ and $c=\frac{1}{a^2+a}$,  then ${_c}\Delta_G(a,\frac{1}{a+1})=4$;
\item[$(ii)$]  For $a\in\F_{2^n}^*$ with   $\Tr\left(\frac{a}{(a+1)^2} \right)=0$, $b=\frac{1}{a^2}$, and $c=\frac{a+1}{a^2}$, then   $ {_c}\Delta_G(a,\frac{1}{a^2})=4$. 
  \end{enumerate}
 \end{thm}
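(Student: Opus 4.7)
The key structural observation is that $G$ agrees with the inverse $x \mapsto 1/x$ on $\F_{2^n}\setminus\{0,1\}$ while $G(0)=1$ and $G(1)=0$. Consequently the $c$-differential equation $G(x+a)+cG(x)=b$ reduces, on the ``generic'' set $\{x \,:\, x,\,x+a\notin\{0,1\}\}$, to the rational equation $1/(x+a)+c/x=b$, which after clearing denominators becomes the quadratic
\[
bx^2+(ab+1+c)x+ca=0.
\]
The cases $a=0$ and $a=1$ I would dispose of first: for $a=0$ the equation is $(1+c)G(x)=b$, which has a unique solution (since $c\neq 1$ and $G$ is a permutation); for $a=1$ the exceptional set collapses to $\{0,1\}$, and a short check handles two linear conditions on $(b,c)$. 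For $a\notin\{0,1\}$ the exceptional set is $\{0,1,a,1+a\}$, and Lemma~\ref{lem10}(i) ultimately controls the number of generic roots.

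The total count is the number of quadratic roots lying outside the exceptional set plus the number of exceptional points satisfying the original equation. Each exceptional $x$ yields a linear constraint on $(a,b,c)$ (e.g., $x=0$ is a solution iff $b=1/a+c$; $x=1$ iff $b=1/(a+1)$; $x=a$ iff $b=(a+c)/a$; $x=1+a$ iff $b=c/(a+1)$). To get $_c\Delta_G\leq 4$, I would enumerate the $\binom{4}{2}=6$ pairs of exceptional simultaneous solutions, verify that exactly four are consistent with $c\neq 1$ while two force $c=1$, and check that \emph{three} simultaneous exceptional solutions would also force $c=1$. Combined with at most two quadratic roots, this gives the bound.

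To locate equality, I use the involution $x\mapsto x+a$ which yields the symmetry ${}_c\Delta_G(a,b)={}_{1/c}\Delta_G(a,b/c)$; this pairs the four consistent two-of-a-kind configurations into two representatives, namely $\{x=0,\,x=1\}$ producing $(b,c)=(1/(a+1),\,1/(a(a+1)))$ and $\{x=0,\,x=1+a\}$ producing $(b,c)=(1/a^2,\,(a+1)/a^2)$, which are precisely (i) and (ii). For each representative I would substitute into the quadratic, simplify the coefficients in terms of $a$ and $a+1$, and invoke Lemma~\ref{lem10}(i): in case (i) the reduced form $x^2+((a+1)/a)x+1=0$ gives $\Tr(a^2/(a+1)^2)=\Tr(a/(a+1))$ as the requested condition, and a parallel reduction yields the stated trace condition in case (ii). Finally, I would verify via Vieta's formulas that the two quadratic roots never coincide with a point of $\{0,1,a,1+a\}$, so the two types of contributions are genuinely disjoint.

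The main obstacles I foresee are the combinatorial bookkeeping---ruling out accidental collisions between a quadratic root and an exceptional point, and ruling out triple coincidences among exceptional solutions when $c\neq 1$---and the need to simplify the trace condition into the clean shape stated. The end claims are comparatively easy: for $n=2,3$ I would enumerate directly; the ``all $[1,2,3,4]$ entries occur'' parenthetical follows by exhibiting concrete $(a,b,c)$ realizing the single-exceptional-point and zero-exceptional-point subcases; and for $n\geq 4$ a short counting argument on the trace-kernel exhibits an $a$ meeting the condition in~(i) or~(ii).
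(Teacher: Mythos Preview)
Your approach mirrors the paper's: split the $c$-differential equation into the generic quadratic on $x\notin\{0,1,a,a+1\}$ and the four exceptional points, tabulate the linear constraints each exceptional point imposes on $(a,b,c)$, and invoke Lemma~\ref{lem10}(i) for the quadratic. The paper runs this as a case split on $(a,b)$, while you organize it by first counting which exceptional pairs are compatible with $c\neq 1$ and then reducing via the involution $x\mapsto x+a$; that symmetry ${}_c\Delta_G(a,b)={}_{1/c}\Delta_G(a,b/c)$ is a clean device the paper does not make explicit---it is precisely why the four compatible exceptional pairs collapse to the two listed representatives (i),(ii), and in fact the paper only writes down those two without remarking that $\{1,a\}$ and $\{a,a+1\}$ are their images under the symmetry.

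One warning on your final verification step: the claim that the two quadratic roots ``never coincide with a point of $\{0,1,a,1+a\}$'' is not literally true. In case~(i) the quadratic reduces to $x^2+\tfrac{a+1}{a}x+1=0$, and substituting shows $x=a+1$ is a root exactly when $a^3+a^2+1=0$; such $a$ exist whenever $3\mid n$, and then only the other root $1/(a+1)$ lies in the generic set, so the count at that $(a,b,c)$ is $3$ rather than $4$. (A similar check is needed in case~(ii).) The paper glosses over this same point. It does not affect the upper bound $\leq 4$ nor the existence of entries equal to $4$---for those your outline is fine---but it does mean the precise ``if'' direction of the stated characterization requires excluding these degenerate $a$, so your Vieta verification will turn up exceptions rather than a blanket confirmation.
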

 \begin{proof}
  We display first the $c$-differential equation
\begin{equation}
\begin{split}
\label{eq:cdiff}
& (x+a)^{2^n-2}+(x+a)^{2^n-1}+(x+a+1)^{2^n-1}\\
&\qquad\qquad\qquad\qquad\qquad\qquad  + c \left( x^{2^n-2}+x^{p^n-1}+(x+1)^{2^n-1} \right)=b.
\end{split}
\end{equation}

 We ran a simple SageMath code and found easily the claims about $n=2,3,4$, so we now assume $n\geq 5$.
 
 For $a,b\in\F_{2^n}$, we  consider the $c$-differential equation~\eqref{eq:cdiff}. If $a=0$, then~\eqref{eq:cdiff} becomes
 \begin{align*}
 b&=x^{2^n-2}+x^{2^n-1}+(x+1)^{2^n-1}+c \left(x^{2^n-2}+x^{2^n-1}+(x+1)^{2^n-1} \right)\\
 &=(1+c) G(x).
 \end{align*}
 Since $G$ is also a permutation, then there exists a unique solution $x$, regardless of what $c\neq 1,b\in\F_{2^n}$. Thus, ${_c}\Delta_G(0,b)=1$. From here on forward, we will assume that $a\neq 0$.

 \vspace{.2cm}
 \noindent
 {\em Case $1$.} Let $a=1,b=0$.
 Then~\eqref{eq:cdiff} becomes
 \begin{align*}
 (x+1)^{2^n-2}+(x+1)^{2^n-1}+x^{2^n-1}+c \left(x^{2^n-2}+x^{2^n-1}+(x+1)^{2^n-1} \right) =0.
 \end{align*}
 Surely, $x=0$ is a solution if and only if $1+1+0+c(0+0+1)=0$, that is, $c=0$, a contradiction.   If $x=1$, then $0+0+1+c(1+1+0)=b$, so $b=1$, a contradiction.   If $x\neq 0,1$, then multiplying the displayed equation by $x(x+1)$, renders $x+c(x+1)=0$, and so, $x=\frac{c}{c+1}$, which implies $1\mapsto {_c}\Delta_G(1,0)$.

 \vspace{.2cm}
 \noindent
 {\em Case $2$.} Let $a=1,b=1$. 
 If $x=0$ in~\eqref{eq:cdiff}, then we must have $1+1+0+c(0+0+1)=1$, so $c=1$, a contradiction.
 If $x=1$ in~\eqref{eq:cdiff}, then we must have  $0+0+1+c(1+1+0)=1$, which is always true.
 If $x\neq 0,1$, then multiplying the displayed equation by $x(x+1)$, we get
 \[
 x+c(x+1)=x(x+1), \text{ that is, } x^2+c x+c=0.
 \]
 By Lemma~\ref{lem10}, since $c\neq 0$, two solutions (surely, $x=1$ cannot be one such solution) exist if and only if $\Tr(c^2)=\Tr(c)=0$,  therefore, we have   $3\mapsto {_c}\Delta_G(1,1)$ (including the prior $x=1$), under this condition, and a contribution  of $2$, otherwise. 

 \vspace{.2cm}
 \noindent
 {\em Case $3$.} Let $a=1,b\neq 0,1$.  If $x=0$, then we must have $1+1+0+c(0+0+1)=b$, that is, $b=c$. Thus, $1\mapsto {_c}\Delta_G(1,c)$. If $x=1$, then we must have $0+0+1+c(1+1+0)=b$, and so, $b=1$, a contradiction. If $x\neq 0,1$, multiplying Equation~\eqref{eq:cdiff} by $x(x+1)$ gives us
 \[
  x+c(x+1)=bx(x+1), \text{ that is, } x^2+\frac{b+c+1}{b} x+\frac{c}{b}=0,
 \]
 which, by Lemma~\ref{lem10}, under $b+c+1\neq 0$, has two distinct solutions if and only if $\Tr\left(\frac{bc}{(b+c+1)^2} \right)=0$. Thus, if $b=c$, and  $\Tr(c)=0$, then $3\mapsto {_c}\Delta_G(1,c)$, and if $b=c, \Tr(c)=1$, then $1\mapsto {_c}\Delta_G(1,c)$. If $b\neq c$, and $\Tr\left(\frac{bc}{(b+c+1)^2} \right)=0$, then $2\mapsto {_c}\Delta_G(1,b)$.
 
  \vspace{.2cm}
 \noindent
 {\em Case $4$.} Let $a\neq 0,1$.  
 If $x=0$, then we must have $\frac{1}{a}+1+1+c(0+0+1)=b$, so $b=c+\frac{1}{a}$. Thus, $1\mapsto {_c}\Delta_G(a,c+\frac{1}{a})$.
 If $x=1$, then $\frac{1}{a+1}+1+1+c(1+1+0)=b$, so $b=\frac{1}{a+1}$. Hence, $1\mapsto {_c}\Delta_G(a,\frac{1}{a+1})$.
 If $x=a$, then $0+0+1+c\left(\frac{1}{a}+1+1\right)=b$, so $b=1+\frac{c}{a}$. Therefore, $1\mapsto {_c}\Delta_G(a,1+\frac{c}{a})$.
 If $x=a+1$, then $1+1+0+c\left(\frac{1}{a+1}+1+1\right)=b$, so $b=\frac{c}{a+1}$. Thus, $1\mapsto {_c}\Delta_G(a,\frac{c}{a+1})$.
 Assuming $x\neq 0,1,a,a+1$, multiplying Equation~\eqref{eq:cdiff} by $x(x+a)$  implies
 \[
 x+c(x+a)=bx(x+a),\text{ that is, } x^2+\frac{ab+c+1}{b} x+\frac{ac}{b}=0.
 \]
 By Lemma~\ref{lem10}, under $ab+c+1\neq 0$, this last equation has two distinct solutions if and only if $\Tr\left(\frac{abc}{(ab+c+1)^2} \right)=0$. 
 
 We now summarize the largest contributions to ${_c}\Delta_G(a,b)$: if $b=c+\frac{1}{a}=\frac{1}{a+1}$ (so, $c=\frac{1}{a^2+a}$; observe that $Tr(\frac{1}{c})=0$) and  $\Tr\left(\frac{abc}{(ab+c+1)^2} \right)=\Tr\left(\frac{a^2}{(a+1)^2} \right)=Tr\left(\frac{a}{a+1} \right)=0$, then $4\mapsto {_c}\Delta_G(a,\frac{1}{a+1})$; if $b=c+\frac{1}{a}=\frac{c}{a+1}$ (so, $c=\frac{a+1}{a^2}$ and $b=\frac{1}{a^2}$; observe that $\Tr(c)=0$) and $\Tr\left(\frac{a}{(a+1)^2} \right)=0$, then $4\mapsto {_c}\Delta_G(a,\frac{1}{a^2})$. All values $[1,2,3,4]$ as $c$-DDT entries will occur, and we argue that next. For the values $1,2,3$, that is obvious from our case splitting.  The value $4$ will occur if we can find parameters satisfying the previous conditions, and we achieve that in the following way. We can  take $\beta\in\F_{2^n}^*$ such that $\Tr(\beta)=\Tr(1)$ and set $a:=\frac{1}{\beta}+1$, therefore, $\Tr\left(\frac{a}{a+1} \right)=\Tr(1)+\Tr\left(\frac{1}{a+1} \right)=\Tr(1)+\Tr(\beta)=0$. Thus, the second condition is satisfied, and therefore $\delta_{F,c}\leq 4$ with the upper bound being attained.
 \end{proof}

\section{The $c$-boomerang uniformity of the swapped inverse function}
\label{sec5}

 We now investigate the $c$-BCT for the same swapped inverse function. It was shown in~\cite{Li19} that if $c=1$, then the boomerang uniformity of $G_{01}$ is $10$, if $n\equiv 0\pmod 6$, $8$,  if $n\equiv 3\pmod 6$, and 6,  if $n\not\equiv 0\pmod 3$. We will be using the notation $i\mapsto {_c}\cB_G(a,b)$ to mean a contribution of $i$ will be added to ${_c}\cB_G(a,b)$.
 
 We will be using throughout the  $c$-boomerang system for the $(0,1)$-swapping of the binary inverse function:
\begin{equation}
\begin{split}
\label{eq:cboom}
 & (x+\gamma)^{p^n-2}+(x+\gamma)^{p^n-1}+(x+\gamma+1)^{p^n-1}\\
 &\qquad\qquad\qquad -c \left( x^{p^n-2}+x^{p^n-1}+(x+1)^{p^n-1} \right)=b,\\
& (x+\gamma+a)^{p^n-2}+(x+\gamma+a)^{p^n-1}+(x+\gamma+a+1)^{p^n-1}\\
 &\qquad\qquad -c^{-1}\left( (x+a)^{p^n-2}+(x+a)^{p^n-1}+(x+a+1)^{p^n-1} \right)=b. 
\end{split}
\end{equation}

 \begin{thm}
 \label{thmBCT}
 Let $n\geq 2$ be a positive integer, $0,1\neq c\in\F_{2^n}$ and $F:\F_{2^n}\to\F_{2^n}$ be the inverse function defined by $F(x)=x^{2^n-2}$ and $G$ be its $(0,1)$-swapping. 
 If $n=2$,  ${_c}\cB_G (a,b)=1$;  if $n=3$, for $a,b\in\F_{2^n}$, ${_c}\cB_G (a,b)\leq 4$ \textup{(}all $[1,2,3,4]$ entries  occur\textup{)}.
 If $n\geq 4$, and 
 for any $a,b\in\F_{2^n}$,  the $c$-BCT entries satisfy ${_c}\cB_G(a,b)\leq 5$. The upper bound is attained (at least) for $n\equiv 0\pmod 3$, $n\equiv 0\pmod 4$, $n\equiv 0\pmod 5$. 
 \end{thm}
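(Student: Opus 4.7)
The plan is to mirror the structure of the preceding $c$-differential proof but applied to the two-equation $c$-boomerang system~\eqref{eq:cboom}. I would first dispense with the small cases $n=2$ and $n=3$ by direct computer search (the analogue of the SageMath verification used for the previous theorem), obtaining $_c\cB_G(a,b)=1$ when $n=2$, and both the upper bound $4$ and the exhibition of all values $[1,2,3,4]$ when $n=3$.

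For $n\geq 4$, I would work with the summation form
\[
{_c}\cB_G(a,b)=\sum_{\gamma\in\F_{2^n}} \#\{x\in\F_{2^n}:{_c}D_\gamma G(x)=b \text{ and } {_{c^{-1}}}D_\gamma G(x+a)=b\}.
\]
The key observation is that $G$ coincides with the ordinary inverse $F(x)=x^{2^n-2}$ outside the singular set $\{0,1\}$, while $G(0)=1$ and $G(1)=0$. I would partition the counted pairs $(x,\gamma)$ according to whether the quadruple $(x,\,x+\gamma,\,x+a,\,x+\gamma+a)$ meets $\{0,1\}$. In the \emph{generic case}, where all four coordinates avoid $\{0,1\}$, the two equations reduce to the $c$-boomerang system for $F$, whose contribution is bounded by $3$ by the result of~\cite{S20} quoted in the introduction. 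In the \emph{singular case}, at least one of the four coordinates equals $0$ or $1$, yielding eight linear forcings on $(x,\gamma)$. For each forcing, substitution collapses the pair of equations either to a single linear constraint (at most one solution) or to a quadratic over $\F_{2^n}$ whose number of roots is controlled by Lemma~\ref{lem10}. A careful bookkeeping across these eight forcings, together with their pairwise intersections (to avoid double counting when two coordinates are simultaneously singular), should show that the singular case adds at most $2$ to the generic count, producing the claimed bound $_c\cB_G(a,b)\leq 5$.

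For the attainment claim, I would seek explicit triples $(a,b,c)$ inside the subfield $\F_{2^k}\subset\F_{2^n}$ for $k\in\{3,4,5\}$ (which exists precisely when $n\equiv 0\pmod k$). Guided by the $c$-differential analysis, where $\delta_{G,c}=4$ is reached under conditions like $c=1/(a^2+a)$ or $c=(a+1)/a^2$ together with a trace condition, I would look for parameters aligning two singular sub-cases with three generic $c$-boomerang solutions. The needed trace conditions (via Lemma~\ref{lem10} and Hilbert 90) pull down to $\F_{2^k}$, where they can be verified in a small subfield, exhibiting an explicit witness $(a,b,c)$ for each of the three residue classes.

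The main obstacle will be the singular-case enumeration: with up to eight linear forcings from each of the two equations, the naive case tree has many branches, and substantial work is needed to identify which combinations are consistent, to discard configurations forbidden by $a\neq 0$ and $c\neq 0,1$, and to verify that the total additional contribution never exceeds $2$. The attainment step is also delicate because each of $n\equiv 0\pmod 3,\,4,\,5$ needs its own explicit construction; the challenge there is showing that the relevant trace and quadratic-solvability conditions are compatible with the algebraic identities required to simultaneously realize five solution pairs.
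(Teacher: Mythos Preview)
Your decomposition into a \emph{generic} region (all four of $x,\,x+\gamma,\,x+a,\,x+\gamma+a$ outside $\{0,1\}$) and a \emph{singular} region (at least one hits $\{0,1\}$) is exactly the skeleton the paper uses. However, the numerical split you propose is the wrong way round, and one of your two bounds is actually false.

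For the generic region your appeal to~\cite{S20} is valid but loose: once all four arguments avoid $\{0,1\}$ (in particular, are nonzero), the two boomerang equations for $G$ become $\frac{1}{x+\gamma}+\frac{c}{x}=b$ and $\frac{1}{x+\gamma+a}+\frac{1}{c(x+a)}=b$, and eliminating $x+\gamma$ yields a single quadratic in $x$ (the paper's $E_{10}$), so the generic contribution is at most~$2$, not~$3$.

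The serious gap is your claim that the singular region adds at most~$2$. This is false. In every one of the paper's attainment examples (for $n\equiv 0\pmod{3,4,5}$) the five solutions split as \emph{three} singular and \emph{two} generic: e.g.\ for $n\equiv 0\pmod 5$ with $c^5+c^3+c^2+c+1=0$, $a=\frac{c}{c^2+c+1}$, $b=\frac{c^2+c+1}{c^2}$, the three singular solutions come from the lines $x=0$, $x+\gamma=1$, and $x+\gamma+a=0$ respectively. So the inequality you need is ``singular~$\leq 3$'', not~$\leq 2$, and this is precisely where all the work lies. The paper does it by listing the eight single-forcing lines plus the finitely many double-forcing points, extracting ten residual constraints $E_0,\ldots,E_9$ (each contributing at most one solution for fixed $(a,b,c)$), observing a $c\leftrightarrow c^{-1}$ duality that pairs them as $\{E_0,E_1\},\{E_2,E_8\},\{E_3,E_9\},\{E_4,E_6\},\{E_5,E_7\}$ with at most one member of each pair realizable, and then doing a further round of pairwise exclusions to rule out four of the five pairs coexisting. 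Your sketch (``each forcing gives a linear or quadratic constraint, bookkeep across the eight and their intersections'') does not capture this mechanism and, as it stands, would not close the argument.

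Your attainment strategy via subfields $\F_{2^k}$, $k\in\{3,4,5\}$, is the right instinct and matches the paper: the specific $c$'s are roots of irreducible cubics, quartics, or quintics over $\F_2$ (e.g.\ $c^3+c^2+1=0$, $c^4+c^3+c^2+c+1=0$, $c^5+c^3+c^2+c+1=0$), so they live exactly in those subfields.
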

 \begin{proof}
 The instances $n=2,3$ were treated  with a SageMath program. We now assume that $n\geq 4$. We do not need to consider the cases of $a=0$, since, then ${_c}\cB_G (0,b)={_c}\Delta_G(0,b)$, which was considered earlier. 
 
 Let $a=1$. The $c$-boomerang system~\eqref{eq:cboom} becomes
 \begin{equation}
 \begin{split}
 \label{eq:a1}
 & (x+\gamma)^{2^n-2}+(x+\gamma)^{2^n-1}\\
 &\qquad\qquad +(x+\gamma+1)^{2^n-1}+c \left( x^{p^n-2}+x^{p^n-1}+(x+1)^{p^n-1} \right)=b,\\
 & (x+\gamma+1)^{2^n-2}+(x+\gamma+1)^{2^n-1}+(x+\gamma)^{2^n-1}\\
 &\qquad\qquad +\frac{1}{c} \left( (x+1)^{p^n-2}+x^{p^n-1}+(x+1)^{p^n-1} \right)=b.
 \end{split}
 \end{equation}
 If $x=0$, then $b=\gamma^{2^n-2}+\gamma^{2^n-1}+(\gamma+1)^{2^n-1}+c=(\gamma+1)^{2^n-2}+\gamma^{2^n-1}+(\gamma+1)^{2^n-1}$. If $\gamma=0,1$, then $c=1$, a contradiction. If $\gamma\neq 0,1$, then $b=\frac{c}{c+1}$ and  $ {1\mapsto {_c}\cB_G\left(1,\frac{c}{c+1}\right)}$. 
 
 Next, if $x=1$, then $b=(\gamma+1)^{2^n-2}+(\gamma+1)^{2^n-1}+\gamma^{2^n-1}=\gamma^{2^n-2}+\gamma^{2^n-1}+(\gamma+1)^{2^n-1}+\frac{1}{c}$. If $\gamma=0,1$ we get a contradiction. If $\gamma\neq 0,1$, then $b=\frac{1}{\gamma+1}=\frac{1}{\gamma}+\frac{1}{c}$, rendering $b=\frac{1}{c+1}$ and so, ${1\mapsto {_c}\cB_G\left(1,\frac{1}{c+1}\right)}$.
 
 If $x=\gamma$, then $b=1+\frac{c}{x}=\frac1{c(x+1)}$, so
 $c b^2  + (c^2+c+1)b+1=0$. 
 There exists a solution $b=c^{1/2}$ if $c^2+c+1=0$ and so, ${1\mapsto {_c}\cB_G\left(1,c^{1/2}\right)}$, or two solutions $b$ if $c^2+c+1\neq 0$ and $\Tr\left(\frac{c}{(c^2+c+1)^2 }\right)=0$, and therefore ${1\mapsto {_c}\cB_G\left(1,b\right)}$, for any such $b$. 
 
 Similarly, if $x=\gamma+1$, the obtained equation is $cb^2+(c^2+c+1)b+c^2=0$.
 If $c^2+c+1=0$, then $b=c^{1/2}$ and ${1\mapsto {_c}\cB_G\left(1,c^{1/2}\right)}$, and  if $c^2+c+1\neq 0$ and $\Tr\left(\frac{c^3}{(c^2+c+1)^2 }\right)=0$, and therefore ${1\mapsto {_c}\cB_G\left(1,b\right)}$, for any such $b$. 
 
 If $x\neq 0,1,\gamma,\gamma+1$, the system is then $b=\frac1{x+\gamma}+\frac{c}{x}
= \frac{1}{x+\gamma+1}+\frac1{c(x+1)}$, implying that $b^2 c x^2+ (1 + c + b) (1 + c + b c) x+c(1+ c + b c) =0$. If $b=c+1$, or $b=\frac{c+1}{c}$, there is a unique solution $b$, and if $c+1\neq b\neq \frac{c+1}{c}$ and $\Tr\left(\frac{b^2c^2}{(1+c+bc)(1+c+b)^2} \right)$, then ${1\mapsto {_c}\cB_G\left(1,b\right)}$, for any such $b$.

 We quickly see that the maximum claimed $c$-BCT entry cannot be achieved when $a=1$. 
  
 From here on forward, we take $a\neq 0,1$.
We will consider the  cases of $(x,\gamma)$ being one of: 
\begin{align*}
&(0,0), (1,1), (0,1), (1,0), (0,a)_{a\neq 0,1}, (a,0)_{a\neq 0,1}, (0,a+1)_{a\neq 0,1}, \\
& (a+1,0)_{a\neq 0,1},  (1,a)_{a\neq 0,1},(a,1)_{a\neq 0,1},(1,a+1)_{a\neq 0,1}, (a+1,1)_{a\neq 0,1}, \\
&  (a,a)_{a\neq 0,1}, (a+1,a+1)_{a\neq 0,1},(a,a+1)_{a\neq 0,1},  (a+1,a)_{a\neq 0,1}, \\
& (x,x)_{x\neq 0,1,a,a+1}, (x,x+1)_{x\neq 0,1,a,a+1},
 (0,\gamma)_{\gamma\neq 0,1,a,a+1},(1,\gamma)_{\gamma\neq 0,1,a,a+1},\\
&(a,\gamma)_{\gamma\neq 0,1,a,a+1}, (a+1,\gamma)_{\gamma\neq 0,1,a,a+1},
 (x,x+a)_{x\neq 0,1,a,a+1}, \\
 & (x,x+a+1)_{x\neq 0,1,a,a+1}, x+\gamma\neq 0,1,a,a+1, x\neq 0,1,a,a+1.
\end{align*}

  \vspace{.2cm}
 \noindent
 {\em Case $1$.} Let $x=\gamma=0$. The $c$-boomerang system becomes (recall that $a\neq 0,1$), $ b=c+1= \frac{1}{a}+  \frac{1}{ac}=\frac{c+1}{ac}$. Thus, $a=\frac{1}{c},b=c+1$, so ${1\mapsto {_c}\cB_G\left(\frac{1}{c},c+1\right)}$.
 
   \vspace{.2cm}
 \noindent
 {\em Case $2$.}  Let $x=\gamma=1$. The $c$-boomerang system becomes transforms into
$b=1=\frac{1}{a}+\frac{1}{c(a+1)}$, implying $a(a+1)c+c(a+1)+a=0$, that is, 
 \begin{equation}
 \label{eq0}
 a^2c+a+c=0,
 \end{equation}
  which, by Lemma~\ref{lem10} has solutions $a$ if and only $\Tr\left(c^2\right)=\Tr\left(c\right)=0$ (using Hilbert's Theorem~90), so 
 ${1\mapsto {_c}\cB_G\left(a,1\right)}$, for such an~$a$.

    \vspace{.2cm}
 \noindent
 {\em Case $3$.}  Let $(x,\gamma)=(0,1)$. Then, we must have   $b=c=\frac{1}{a+1}+\frac{1}{ac}$, so, 
 \begin{equation}
 \label{eq1}
 a^2c^2+a(c^2+c+1)+1=0. 
 \end{equation}
 If $c^2+c+1=0$,
  then $a=\frac{1}{c}$, so, ${1\mapsto {_c}\cB_G\left(\frac{1}{c},c\right)}$, and if $c^2+c+1\neq 0$ and $\Tr\left(\frac{c}{c^2+c+1} \right)=0$, then ${1\mapsto {_c}\cB_G\left(a,c\right)}$.
 
     \vspace{.2cm}
 \noindent
 {\em Case $4$.}  Let $(x,\gamma)=(1,0)$. Then, we must have   $b=0=\frac{1}{a+1}+\frac{1}{c(a+1)}$, so $c=1$, which is not allowed.

    \vspace{.2cm}
 \noindent
 {\em Case $5$.}  Let $(x,\gamma)=(0,a)$, $a\neq 0,1$. We must have $b=\frac{1}{a}+c=1+\frac{1}{ca}
$, so $a=\frac{1}{c}, b=0$ and  ${1\mapsto {_c}\cB_G\left(\frac{1}{c},0\right)}$.

 \vspace{.2cm}
 \noindent
 {\em Case $6$.}  Let $(x,\gamma)=(a,0)$, $a\neq 0,1$. Then, $b=\frac{1}{a}+\frac{c}{a}=1+\frac{1}{c}$, so $a=c,b=1+\frac{1}{c}$, and ${1\mapsto {_c}\cB_G\left(c,1+\frac{1}{c}\right)}$.
 
  \vspace{.2cm}
 \noindent
 {\em Case $7$.}  Let $(x,\gamma)=(0,a+1)$, $a\neq 0,1$. 
 Then, $b=\frac{1}{a+1}+c=\frac{1}{ac}$, so Equation~\eqref{eq1} holds.  If $c^2+c+1=0$,
  then $a=\frac{1}{c},b=1$, so, ${1\mapsto {_c}\cB_G\left(\frac{1}{c},1\right)}$, and if $c^2+c+1\neq 0$ and $\Tr\left(\frac{c}{c^2+c+1} \right)=0$, then ${1\mapsto {_c}\cB_G\left(a,\frac{1}{ac}\right)}$. 
 
   \vspace{.2cm}
 \noindent
 {\em Case $8$.}  Let $(x,\gamma)=(a+1,0)$, $a\neq 0,1$. The $c$-boomerang system is then $b=\frac{1}{a+1}+\frac{c}{a+1}=0$, so $c=1$, a contradiction.

     \vspace{.2cm}
 \noindent
 {\em Case $9$.}  Let $(x,\gamma)=(1,a)$, $a\neq 0,1$. We must have $b=\frac{1}{a+1}=\frac{1}{c(a+1)}$, so $c=1$, a contradiction.

  \vspace{.2cm}
 \noindent
 {\em Case $10$.}  Let $(x,\gamma)=(a,1)$, $a\neq 0,1$. Then, $b=\frac{1}{a+1}+\frac{c}{a}=\frac{1}{c}$, so 
 \begin{equation*}
 \label{eq2}
 a^2 + (c^2 + c + 1) a+ c^2 =0.
 \end{equation*} 
 If $c^2+c+1=0$,
  ${1\mapsto {_c}\cB_G\left(c,\frac{1}{c}\right)}$, and if $c^2+c+1\neq 0$ and $\Tr\left(\frac{c}{c^2+c+1} \right)=0$, then ${1\mapsto {_c}\cB_G\left(a,\frac{1}{c}\right)}$.

  \vspace{.2cm}
 \noindent
 {\em Case $11$.}  Let $(x,\gamma)=(1,a+1)$, $a\neq 0,1$. Thus, $b=\frac{1}{a}=1+\frac{1}{c(a+1)}$, implying Equation~\eqref{eq0}. Thus, if $\Tr(c)=0$, then ${1\mapsto {_c}\cB_G\left(a,\frac{1}{a}\right)}$.

   \vspace{.2cm}
 \noindent
 {\em Case $12$.}  Let $(x,\gamma)=(a+1,1)$, $a\neq 0,1$. Then, $b=\frac{1}{a}+\frac{c}{a+1}=1$, so
 \begin{equation*}
 \label{eq3}
 a^2+ac+1=0.
 \end{equation*}
 If $\Tr\left(\frac{1}{c} \right)=0$, then ${1\mapsto {_c}\cB_G\left(a,1\right)}$.

      \vspace{.2cm}
 \noindent
 {\em Case $13$.}  Let $(x,\gamma)=(a,a)$, $a\neq 0,1$. Then, we must have $b=1+\frac{c}{a}=\frac{1}{a}+\frac{1}{c}$, so $a=c,b=0$. Thus, ${1\mapsto {_c}\cB_G\left(c,0\right)}$.

  \vspace{.2cm}
 \noindent
 {\em Case $14$.}  Let $(x,\gamma)=(a+1,a+1)$, $a\neq 0,1$. Then, $b=1+\frac{c}{a+1}=\frac{1}{a}$, that is, 
 \[
 a^2+ac+1=0.
 \]
  Therefore, if $\Tr\left(\frac{1}{c} \right)=0$,  ${1\mapsto {_c}\cB_G\left(a,\frac{1}{a}\right)}$.

   \vspace{.2cm}
 \noindent
 {\em Case $15$.}  Let $(x,\gamma)=(a,a+1)$, $a\neq 0,1$. Then, we must have
 $b=\frac{c}{a}=\frac{1}{a+1}+\frac{1}{c}$, and so, 
$
 a^2 + (c^2 + c + 1) a+ c^2 =0.
$
  If $c^2+c+1=0$,
  ${1\mapsto {_c}\cB_G(c,1)}$, and if $c^2+c+1\neq 0$ and $\Tr\left(\frac{c}{c^2+c+1} \right)=0$, then ${1\mapsto {_c}\cB_G\left(a,\frac{c}{a}\right)}$.

  \vspace{.2cm}
 \noindent
 {\em Case $16$.}  Let $(x,\gamma)=(a+1,a)$, $a\neq 0,1$. We must have $b=\frac{c}{a+1}=\frac{1}{a+1}$, that is, $c=1$, a contradiction.

   \vspace{.2cm}
 \noindent
 {\em Case $17$.}  Let $(x,\gamma)=(x,x)$, $x\neq 0,1,a,a+1$. Thus,   $b=1+\frac{c}{x}  =\frac{1}{a}+\frac{1}{c(x+a)}$, so 
\begin{equation}
\label{eq:bb2}
a^2 c\,b^2+b\,a(c^2+c+1+ac)+ a c+a+c^2 =0.
\end{equation}
 Equation~\eqref{eq:bb2} will have one  solution $b=\frac{1}{c^2+c+1}$ when $a=\frac{c^2+c+1}{c}$ (observe that $c^2+c+1\neq 0$, since otherwise, $c=0$),  and so, ${1\mapsto {_c}\cB_G\left(\frac{c^2+c+1}{c},\frac{1}{c^2+c+1}\right)}$, and two $b$'s if $\Tr\left(\frac{c \left(a c+a+c^2\right)}{\left(a   c+c^2+c+1\right)^2} \right)  = \Tr\Big(\frac{c }{a c+c^2+c+1} +\frac{c^2}{\left(a   c+c^2+c+1\right)^2}+\frac{c \left(a+1\right)}{\left(a   c+c^2+c+1\right)^2}\Big) = \Tr\left(\frac{c \left(a+1\right)}{\left(a   c+c^2+c+1\right)^2}\right)=0$, and,  for such  $a,b$, we have ${1\mapsto {_c}\cB_G(a,b)}$.

   \vspace{.2cm}
 \noindent
 {\em Case $18$.}  Let $(x,\gamma)=(x,x+1)$, $x\neq 0,1,a,a+1$. Then,   $b=\frac{c}{x}= \frac{1}{a+1}+\frac{1}{c(x+a)}$, so 
  \[
  a(a+1)c\, b^2  + b (1 + a + a c + c^2 + a c^2)+c^2=0,
  \]
 which has a unique solution $b=\frac{c^2+c+1}{c+1}$ if $a=\frac{c^2+1}{c^2+c+1}$,  and consequently, ${1\mapsto {_c}\cB_G\left(\frac{c^2+1}{c^2+c+1},\frac{c^2+c+1}{c+1}\right)}$, or two solutions $b$ if and only if $a\neq \frac{c^2+1}{c^2+c+1}$ and $\Tr\left(\frac{c^3 a(a+1)}{(1 + a + a c + c^2 + a c^2)^2} \right)=0$, when ${1\mapsto {_c}\cB_G(a,b)}$, for such $a,b$.
 
    \vspace{.2cm}
 \noindent
 {\em Case $19$.}  Let $(x,\gamma)=(0,\gamma)$, $\gamma\neq 0,1,a,a+1$. The system becomes   
  $b=\frac{1}{\gamma}+c=\frac{1}{\gamma+a}+ \frac{1}{ca}$, implying
  \begin{equation}
  \label{eq:bb4}
  a^2 c\, b^2 +b\,a(1+ac^2)+ac^2+ac+1=0.
  \end{equation}
  If $a=\frac{1}{c^2}$, then $b=c$ and ${1\mapsto {_c}\cB_G\left(\frac{1}{c^2},c\right)}$. If $a\neq \frac{1}{c^2}$ and $\Tr\left(\frac{a^2 c(ac^2+ac+1)}{(1+ac^2)^2} \right)=\Tr\left(\frac{a^2c}{1+ac^2} +\frac{a^4 c^2}{(1+ac^2)^2}+\frac{a^3(a+1)c^2}{(1+ac^2)^2}\right)=\Tr\left(\frac{a^3(a+1)c^2}{(1+ac^2)^2}\right)=0$, then  ${1\mapsto {_c}\cB_G\left(a,b\right)}$, for $a,b$ satisfying Equation~\eqref{eq:bb4}.
    
     \vspace{.2cm}
 \noindent
 {\em Case $20$.}  Let $(x,\gamma)=(1,\gamma)$, $\gamma\neq 0,1,a,a+1$. Thus,  
 $b=\frac{1}{\gamma+1}=\frac{1}{\gamma+a+1}+\frac{1}{c(a+1)}$, so 
 \[
 a (a+1)c\, b^2+ab+1=0.
 \]
  Thus, if $\Tr\left(\frac{(a+1)c}{a} \right)=0$, then ${1\mapsto {_c}\cB_G\left(a,b\right)}$.

      \vspace{.2cm}
 \noindent
 {\em Case $21$.}  Let $(x,\gamma)=(a,\gamma)$, $\gamma\neq 0,1,a,a+1$. The system becomes    $b=\frac1{\gamma+a}+\frac{c}{a}=\frac1{\gamma}+\frac{1}{c}$, so
\begin{equation}
\label{eq:bb6}
a^2 c\, b^2  + b \,a (a + c^2)+c^2+ a c+ a=0.
\end{equation}
If $a=c^2$, then $b=\frac{1}{c}$ and ${1\mapsto {_c}\cB_G\left(c^2,\frac{1}{c}\right)}$, and if $a\neq c^2$ and $\Tr\left(\frac{c(c^2+ac+a)}{(a+c^2)^2} \right)= \Tr\left(\frac{c}{a+c^2}+ \frac{a}{a+c^2}+\frac{a^2}{(a+c^2)^2}\right)=\Tr\left(\frac{c}{a+c^2}\right)=0$ (we used Hilbert's Theorem 90), then ${1\mapsto {_c}\cB_G\left(a,b\right)}$, for $a,b$ satisfying Equation~\eqref{eq:bb6}.

      \vspace{.2cm}
 \noindent
 {\em Case $22$.}  Let $(x,\gamma)=(a+1,\gamma)$, $\gamma\neq 0,1,a,a+1$. Then, 
   $b=\frac1{\gamma+a+1}+\frac{c}{a+1} =\frac1{\gamma+1}$, so 
\begin{equation}
\label{eq:bb7}
a(a+1)\, b^2 + ac\, b + c=0.
\end{equation}
Thus, if $\Tr\left(\frac{a+1}{ac} \right)=0$, then ${1\mapsto {_c}\cB_G\left(a,b\right)}$.

      \vspace{.2cm}
 \noindent
 {\em Case $23$.}  Let $(x,\gamma)=(x,x+a)$, $x\neq 0,1,a,a+1$, $a\neq 0,1$. 
 Thus, $b=\frac1{a}+ \frac{c}{x}=1+\frac{1}{c(x+a)}$, so 
 \begin{equation}
 \label{eq:bb8}
 a^2 c\, b^2 + b\,a (c^2 + c + 1 + ac)+1+ac+ac^2=0.
 \end{equation}
 If  $a=\frac{c^2+c+1}{c}$, then $b=\frac{c^2}{c^2+c+1}$, 
  and ${1\mapsto {_c}\cB_G\left(\frac{c^2+c+1}{c},\frac{c^2}{c^2+c+1}\right)}$. If $a\neq \frac{c^2+c+1}{c}$ and $\Tr\left( \frac{c(ac^2+ac+1)}{(c^2 + c + 1 + ac)^2}\right)=\Tr\left(\frac{c^2}{\left(a   c+c^2+c+1\right)^2}+\frac{c}{a   c+c^2+c+1}+\frac{(a+1) c^3}{\left(a   c+c^2+c+1\right)^2} \right)=\Tr\left( \frac{(a+1) c^3}{\left(a
   c+c^2+c+1\right)^2}\right)=0$, then ${1\mapsto {_c}\cB_G\left(a,b\right)}$.

     \vspace{.2cm}
 \noindent
 {\em Case $24$.}  Let $(x,\gamma)=(x,x+a+1)$, $x\neq 0,1,a,a+1$, $a\neq 1$.  Then,
 $b=\frac1{a+1}+\frac{c}{x}=\frac{1}{c(x+a)}$, so 
 \[
  b^2 a(a+1)c + b (1 + a + a c + c^2 + a c^2)+1=0.
 \]
 If $a=\frac{c^2+1}{c^2+c+1}$, $b=\frac{c^2+c+1}{c(c+1)}$, and 
  ${1\mapsto {_c}\cB_G\left(\frac{c^2+1}{c^2+c+1},\frac{c^2+c+1}{c(c+1)}\right)}$, and if 
 $a\neq \frac{c^2+1}{c^2+c+1}$ and $\Tr\left( \frac{a(a+1)c}{(1 + a + a c + c^2 + a c^2)^2}\right)=0$, then ${1\mapsto {_c}\cB_G\left(a,b\right)}$.

     \vspace{.2cm}
 \noindent
 {\em Case $25$.}  Let $x+\gamma\neq 0,1,a,a+1$, $x\neq 0,1,a,a+1$.   
 Then, we must have $b=\frac1{x+\gamma}+\frac{c}{x}=\frac{1}{x+\gamma+a}+\frac{1}{c(x+a)}$, which implies 
 \[
 b x^2 + c y + x (1 + c + b y)=0,   a + a c + a^2 b c + b c x^2 + (1 + a b c) y + x (1 + c + b c y)=0.
 \]
  Adding to the second equation, a $c$ multiple of the first gives us the system (the advantage is that it shows the linear relationship between $x,y$), and replacing $y$ into the second displayed equation, we get
 \begin{equation}
 \begin{split}
 \label{eq:xy}
 & a + a c + a^2 b c + (1 + c^2) x + (1 + a b c + c^2) y=0,\\
 &a b^2 c\, x^2+(1 + a b + c) (1 + c + a b c) x +  a c (1+c +abc) =0.
 \end{split}
 \end{equation}

If $b=\frac{c+1}{a}$, or $b=\frac{c+1}{ac}$, then the second  equation in~\eqref{eq:xy} has one solution $x$ and if $\frac{c+1}{a}\neq b\neq \frac{c+1}{ac}$ and $\Tr\left( \frac{a^2b^2c^2}{(1 + a b + c)^2 (1 + c + a b c)}\right)=0$ it has two solutions. In the first case, ${1\mapsto {_c}\cB_G\left(a,b\right)}$, and in the second case,  ${2\mapsto {_c}\cB_G\left(a,b\right)}$ (for the same $(a,b,c)$ tuple, each of the solutions $x$ will render a solution $y$: that is the reason we wrote one of the equations as an affine plane).

   Now, we need to see how the multiple $(a,b)$ pairs (with or without conditions) overlap, as the contributions to the various $c$-BCT entries will be added. For easy referral, we shall combine them in a set where we put an asterisk if there are conditions on $(a,b)$ (if the conditions are different, then we repeat that pair):
{\small  \begin{align*}
  S&=\Bigg\{
  \left(\frac{1}{c},c+1\right), (a,1)^*,    \left(\frac{1}{c},c\right)^*, (a,c)^*,  (c,0),    \left(a,\frac{1}{a}\right)^*,   (c,1)^*, \left(a,\frac{c}{a}\right)^*,    \\
  & \left(\frac{1}{c},0\right), \left(c,1+\frac{1}{c}\right), \left(\frac{1}{c},1\right)^*, \left(a,\frac{1}{ac}\right)^*, (a,c) ^*, \left(c,\frac{1}{c}\right)^*, \left(a,\frac{1}{c}\right)^*, \left(a,\frac{1}{a}\right)^*,  \\
  & (a,1)^*, \left(\frac{c^2+c+1}{c},\frac1{c^2+c+1} \right), (a,b)^*,  \left(\frac{c^2+1}{c^2+c+1},\frac{c^2+c+1}{c+1} \right),   \\
  &(a,b)^*,   \left(\frac{1}{c^2},c\right), (a,b)^*, (a,b)^*,  \left(c^2,\frac{1}{c}\right), (a,b)^*,  (a,b)^*, \\
  & \left(\frac{c^2+c+1}{c},\frac{c^2}{c^2+c+1} \right),  (a,b)^*, \left(\frac{c^2+1}{c^2+c+1},\frac{c^2+c+1}{c^2+c} \right), (a,b)^*, (a,b)^*
   \Bigg\}.
   \end{align*}
   }
   We will prune the above set, concentrating on the maximal entry value $ {_c}\cB_G(a,b)$, which we claim to be $5$.  Going  through every possibility (we shall go through similar arguments enabling us to motivate the maximum $c$-BCT entry below, so we skip the previous  straightforward pruning argument, albeit tedious), we obtained that the only possible pairs that could possibly overlap five times (we write the constraints next to each of them) are in the displayed list below (recall that $p_{10}$ contributes twice to ${_c}\cB_G(a,b)$).    
 Since we shall refer to the respective pairs in the arguments below the list, by relabeling, we will call the pairs $(a,b)$ satisfying the conditions from which $E_i$ are derived, as coming from Case $C_i$. Also, if the pairs $p_i,p_j$ match the conditions from cases $C_i,C_j$,  we write $p_i=p_j$. We bring up an issue the reader should be aware of, which we alluded to previously: if the Case $C_i$ holds, then $E_i=0$; however, if $E_i=0$, the pair $(a,b)$ may or may not be $p_i$, so one has to continuously check. We find that to be the difficulty in the analysis below, along with the many cases.


{\small
   \allowdisplaybreaks
  \begin{align*}
  &C_0: p_0=\left(a,\frac{1}{ac}\right):   E_0= a^2 c^2 + (c^2+c+1) a + 1=0;\\
 &C_1: p_1=\left(a,\frac{c}{a}\right):  E_1=a^2+(c^2+c+1)a+c^2=0;\\
 &C_2: p_2= (a,b):   E_2=a^2 c\,b^2+a(c^2+c+1+ac)\, b+ a c+a+c^2=0; \\
 & C_3: p_3=(a,b):  E_3= a(a+1)c\, b^2+ b (1 + a + a c + c^2 + a c^2)+c^2=0; \\
& C_4:   p_4=(a,b):  E_4=a^2 c\, b^2 +a(1+ac^2)\, b+ac^2+ac+1=0;\\
& C_5: p_5=(a,b):   E_5=a (a+1)c\, b^2+ab+1=0; \\
&C_6:  p_6=(a,b):    E_6=a^2 c\, b^2  + b \,a (a + c^2)+c^2+ a c+ a=0;\\
& C_7: p_7=(a,b):    E_7=  a(a+1)b^2+acb+c=0; \\
&C_8: p_8=(a,b):   E_8=  a^2 c\, b^2 + a (1 + c + c^2 + ac)b+1+ac+ac^2=0;\\
&C_9:  p_9=(a,b):    E_9=  a(a+1)cb^2+b(1+a+ac+c^2+ac^2)+1=0; \\
&C_{10}:  p_{10}=(a,b):    E_{10}=   a b^2 c\, x^2+(1 + a b + c) (1 + c + a b c) x +  a c (1+c +abc) =0, \\
& \qquad\qquad \text{as well as } E_{10}'=a + a c + a^2 b c + (1 + c^2) x + (1 + a b c + c^2) y=0; \\
   \end{align*}
   }

We make the following  important observation: there is a duality between all of the $E_i$, $0\leq i\leq 9$, expressions. By duality we mean that  one expression is obtained from the other by replacing $c$ by $\frac{1}{c}$. Precisely, $E_0\longleftrightarrow E_1$, $E_2 \longleftrightarrow E_8$, $E_3\longleftrightarrow E_9$, $E_4\longleftrightarrow E_6$, $E_5\longleftrightarrow E_7$.

And here is a simple argument that shows that in each dual pair, if one case occurs in a solution set, then the other cannot, which shows that the $c$-boomerang uniformity is upper bounded by $5$ (we exclude two cases out of the possible $6$ (recall that $C_{10}$ contributes 2 to ${_c}\cB_G(a,b)$) by showing that $\{p_0,p_1\}\cap \{p_5,p_7\}$, as well as $\{p_4,p_6\}\cap \{p_2,p_8\}$,  if nonempty, cannot be part of quintuple  solution sets).

If $p_0=p_1$, then $b=\frac{c}{a}=\frac{1}{ac}$, implying $c=1$, which is impossible.
If $p_2=p_8$, then $1 + a^2 b^2 c + a (b + b^2 c)=0$, $a^2 b^2 + c + a (b^2 + b c)=0$, 
If $p_3=p_9$, then adding $E_3+E_9=0$ we get $c^2+1$, which is impossible.
If $p_4=p_6$, then adding $(1 + b c) E_4 + c (b + c) E_6=0$, renders $b=\frac{b c+c^2+1}{b^2 c+b c^2+b c+b+c}$, which when used in 
 $b E_4 + c E_6=0$, finds $b=\frac{c^2+c+1}{c}=0$ and $a=1$, which, when 
 plugged back into $p_4,p_6$ imply $\frac{c \gamma_4+c+\gamma_4}{c \gamma_4}=\frac{c   \gamma_4+c+\gamma_4}{\gamma_4+1}$ and $\frac{c \gamma_6+\gamma_6+1}{\gamma_6}=\frac{c   \gamma_6+\gamma_6+1}{c (\gamma_6+1)}$, which gives $c=1$, an impossibility.
 If $p_5=p_7$, then  adding $ E_5 + c E_7=0$, we get $b=\frac{1}{a}$, which, when replaced back, gives $c(1+1/a)=0=1+1/a$, and so, $a=1$. However,  $a=1,b=1$, contradicts the conditions imposed in  $C_5,C_7$ (Cases $20$ and $21$), since then $\gamma=0$.
 Further, if $p_0=p_5$, then $b=\frac{1}{ac}$, which used in $E_5$ gives $a=\frac{1}{c},b=1$ (and $c^2+c+1=0$). These values used in ${E_2, E_3, E_4, E_6, E_8, E_9}$ gives us
\[
c + c^2, 1 + c, 1 + c, \frac{1}{c}, \frac{1 + c}{c}, c + c^2,
\] 
and none can be zero. If $p_0=p_7$, $b=\frac{1}{ac}$, then $a=\frac{1+c^2}{c}, b=\frac1{c^2+1}$ (and $ c^5+c^4+c^3+c+1=0$). These values used in $E_2, E_3, E_4, E_6, E_8, E_9$ gives us
\[
c, c^2(c^2+c+1), 1 + c + c^3, \frac1{c^2}, c(c^3+1), \frac{c(c^2+c+1)}{(c+1)^2}.
\]
If $c^2+c+1=0$, then $ c^5+c^4+c^3+c+1=0$ implies $c=1$.  If $c^3+c+1=0$, then $c=0,1$, which is not allowed.
Next, we show that $p_1$ cannot be part of a quintuple solution along with any of the $\{p_5,p_7\}$. First, we assume that $p_1=p_5$ and so, $b=\frac{c}{a}$ and $a=\frac{c^3}{c^3+c+1}$. When plugged back into $E_1=0$, it implies $c^5+c^4+c^2+c+1=0$. Now, these values used in $E_2, E_3, E_4, E_6, E_8, E_9$ gives us
\[
\frac{c+1}{c^5+1}, \frac{1 + c}{c^2}, \frac{c}{1 + c + c^3}, \frac{ c^2(c^3+1)}{1 + c + c^3}, c, \frac{(1 + c)(1 + c^2 + c^3)}{c^2}.
\]
Surely, if $c^3+1=0$, then $c^5+c^4+c^3+c+1=c^4(c+1)\neq 0$, and therefore,  we cannot make up a quintuple solution.
If $p_1=p_7$, then $b=\frac{c}{a}$, so $a=c$, and $b=1$. Replacing these values into  $E_2, E_3, E_4, E_6, E_8, E_9$  gives us
\[
c^2 (c+1), c+1 , (c+1)(c^3+c^2+1) , c(c+1) ,  c+1, c(c+1),
\]
but none of these values can be zero. 

We move on to the second dual pairs $\{p_4,p_6\}\cap \{p_2,p_8\}$. If $p_2=p_4$, then adding $E_2+E_4$ we get $b=\frac{1+c}{ac}$, which when put back into $E_2=0$, say, gives us $c=0$, which is not allowed. If $p_2=p_6$, then adding $E_2+E_6$ implies $ab(a+1)(c+1)=0$. If $a=0$, then $E_2=c^2$, which is not permitted. If $b=0$, then $E_2=a+ac+c^2=0$, so $a=\frac{c^2}{c+1}$. Putting these values into $\{E_1,E_3,E_5,E_7,E_9\}$ renders
\[
\left\{\frac{c^5}{(c+1)^2},c^2,1,c,1\right\},
\]
and none of these values can be zero, so no ``winning'' quintuple.
If $a=1$, then $b=\frac{c^2+c+1}{c}$, which when used into $\{E_1,E_3,E_5,E_7,E_9\}$ gives
\[
\left\{c,c+1,\frac{(c+1)^2}{c},(c+1)^2,c (c+1)\right\},
\]
which cannot be zero.
If $p_4=p_8$, adding $E_4+E_8=0$, we get $a bc(a+1) (c+1 )$. If $a=0$, then $E_4=1$, which is not possible. If $b=0$, then $a=\frac{1}{c^2+c}$, which, when used in $\{E_1,E_3,E_5,E_7,E_9\}$ gives
\[
\left\{\frac{c^6+c+1}{c^2   (c+1)^2},c^2,1,c,1\right\},
\]
so no possible quintuple. If $a=1$, then $b=\frac{c^2+c+1}{c}$, and $\{E_1,E_3,E_5,E_7,E_9\}$ become
\[
\left\{c^2,\frac{c^4+c+1}{c},1,c,\frac{c^4+c^3
   +1}{c}\right\}.
\]
We could possibly get a solution quintuple if $1 + c + c^4=0$ and $1 + c^3 + c^4=0$, but that is impossible.
If $p_6=p_8$, adding $E_6+E_8=0$, we get $b=\frac{1+c}{a}$, but with this value both $E_6,E_8$ become equal to $c$ and consequently not zero.

The above discussion shows that ${_c}\cB_G(a,b)\leq 5$.
Since the rest of  the proof (which is long enough, as it is) deals with finding appropriate cases where the upper bound is attained we moved it to Appendix~A. The theorem is shown.
\end{proof}

We make the following conjecture, based upon our extensive computations.
\begin{conjecture}
For all  $n\geq 4$, then there exists $c\neq 0,1$ such that $\beta_{F,c}=5$.
\end{conjecture}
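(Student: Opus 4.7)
The plan is to establish the conjecture by showing that for every $n \geq 4$ there exists $c \in \F_{2^n} \setminus \{0,1\}$ making five of the cases $C_0, \ldots, C_{10}$ from the proof of Theorem~\ref{thmBCT} fire simultaneously at a single pair $(a,b)$. The theorem already handles $n \equiv 0 \pmod{3, 4, 5}$, so the real work concerns $n$ coprime to $60$ together with finitely many small exceptional exponents. A natural target is a quintuple of shape $\{p_{10}, p_{i_1}, p_{i_2}, p_{i_3}\}$ (since $p_{10}$ contributes $2$ whenever its quadratic has two solutions), or alternatively any five singleton cases not ruled out by the duality obstruction already established. The author's pruning leaves only finitely many candidate templates $\mathcal{T}$, and I would enumerate them systematically.

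For a fixed $\mathcal{T}$, the requirement that all cases fire at a common $(a,b)$ is a system $\Sigma(\mathcal{T})$ consisting of polynomial equations $E_i(a,b,c) = 0$, trace conditions $\Tr(T_j(a,b,c)) = 0$, and non-degeneracy exclusions. Using resultants and Hilbert 90, the polynomial part typically forces $a = A(c)$ and $b = B(c)$ for explicit rational functions, reducing $\Sigma(\mathcal{T})$ to trace constraints $\Tr(\rho_1(c)) = 0, \ldots, \Tr(\rho_k(c)) = 0$ in the single variable $c$, plus a finite exclusion set $S_n$ of size $O(1)$. By inclusion-exclusion,
\[
N(n) := \#\{c \in \F_{2^n} \setminus S_n : \Tr(\rho_j(c)) = 0 \ \forall j\} = \frac{1}{2^k} \sum_{J \subseteq [k]} \sum_{c \in \F_{2^n} \setminus S_n} (-1)^{\Tr(\sigma_J(c))},
\]
where $\sigma_J = \sum_{j \in J} \rho_j$. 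The term $J = \emptyset$ gives main term $\frac{2^n - |S_n|}{2^k}$, while each remaining character sum is, by the Weil bound for additive characters of rational functions on $\mathbb{P}^1$, of size $O(\sqrt{2^n})$ with implicit constant depending only on $\max_j \deg \rho_j$, provided that no $\sigma_J$ is congruent to an Artin--Schreier expression $h^2 + h + \kappa$ with $h \in \overline{\F_2}(c)$ and $\kappa$ constant. Hence $N(n) \geq 2^{n-k} - C\sqrt{2^n} > 0$ once $n \geq n_0(k, C)$.

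The decisive obstacle is the Artin--Schreier non-degeneracy check for each template: a nontrivial degeneracy would mimic the systematic parity-of-$n$ obstructions seen in the classical boomerang spectrum of Li et al. for $c = 1$, and could in principle exclude entire residue classes of $n$. If one template exhibits such a degeneracy, I would substitute another; because the $25$ raw cases produce many distinct rational functions, I expect a finite family of templates to cover every residue class of $n$ modulo some sufficiently large modulus. Finally, the residual small $n$ below $\max_\mathcal{T} n_0(\mathcal{T})$ — in particular the genuinely hard exponents $n \in \{7, 11, 13, \ldots\}$ — would be verified directly by a SageMath enumeration over $c \in \F_{2^n}^*$, exactly in the spirit of the author's own treatment of $n = 2, 3$. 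The two principal technical burdens are therefore the Artin--Schreier verification (essentially a polynomial identity check in $\F_2(c)$ for each $J$) and a reasonable explicit bound on $n_0$ to keep the computer search tractable.
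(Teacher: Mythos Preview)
The paper does \emph{not} prove this statement: it is explicitly labelled a conjecture, and the appendix only verifies the special cases $n\equiv 0\pmod 3$, $n\equiv 0\pmod 4$, $n\equiv 0\pmod 5$ via Theorem~\ref{thmBCT}. So there is no proof in the paper to compare against; what I can do is assess whether your outline would close the gap.

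Your plan has a structural obstruction at the step ``the polynomial part typically forces $a=A(c)$ and $b=B(c)$ for explicit rational functions, reducing $\Sigma(\mathcal T)$ to trace constraints in the single variable $c$''. Re-examine the paper's pruning: every quintuple template must be of the shape $\{p_{10},p_i,p_j,p_k\}$ with $i,j,k\in\{0,\ldots,9\}$ drawn from distinct duality classes (the paper proves that dual pairs, and several cross-pairs, cannot co-occur). That means \emph{three} algebraic equations $E_i=E_j=E_k=0$ in the three unknowns $(a,b,c)$. In every template the paper works out in Appendix~A, eliminating $a,b$ leaves not a trace condition but a \emph{fixed polynomial} $P(c)=0$ of degree at most~$5$ (e.g.\ $c^5+c^3+c^2+c+1=0$ for $\{p_0,p_3,p_8,p_{10}\}$, or $c^3+c^2+1=0$ for $\{p_3,p_7,p_8,p_{10}\}$). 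This is zero-dimensional in $c$, which is precisely why those templates only fire when $n$ is divisible by the degree of the relevant irreducible factor. Your Weil/inclusion--exclusion count $N(n)\sim 2^{n-k}$ presupposes a one-parameter family of admissible $c$; that would require three of the $E_i$ to share a common \emph{curve} in $(a,b,c)$-space, and neither you nor the paper exhibits such a coincidence. Absent that, each template covers only the residue classes $n\equiv 0\pmod{d}$ for a single $d\le 5$, and no finite union of such classes contains all $n\ge 4$ (e.g.\ $n=7,11,13,\ldots$ are missed).

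In short, the decisive missing ingredient is not the Artin--Schreier non-degeneracy check you flag, but the prior existence of a template whose algebraic part is one-dimensional rather than zero-dimensional. Until such a template is produced (or a genuinely different mechanism is found), the Weil-bound machinery has nothing to act on, and the conjecture remains open for $n$ coprime to~$60$.
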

   
\section{Concluding remarks}
\label{sec6}

In this paper we investigate the $c$-differential and $c$-boomerang uniformity of the swapped inverse function. We show that its $c$-differential uniformity is less than or equal to $4$ and its $c$-boomerang uniformity is upper bounded by $5$.
As we saw, investigating questions on $c$-differential and $c$-boomerang uniformities tends to be quite difficult, but it sure is interesting to find other classical PN/APN functions and  study their properties through the new differential. It will also be worthwhile to check into the general $p$-ary  versions of the results from this paper.

\appendix
\section*{Appendices}
\addcontentsline{toc}{section}{Appendices}
\renewcommand{\thesubsection}{\Alph{subsection}}
 \subsection{Completing the proof of Theorem~\ref{thmBCT}}
 
 Here, we finish the proof of Theorem~\ref{thmBCT}.
 We cannot find a simple argument to show that $5$ is always attained {\em for all dimensions}, but we can find some values for $c$ such that  ${_c}\cB_G(a,b)=5$, namely, for $n\equiv 0\pmod 3$, $n\equiv 0\pmod 4$, $n\equiv 0\pmod 5$. We start by showing how we ``guessed'' a quintuple of cases that provide a solution set for the $c$-boomerang system. Recall that if the pair $p_i$ happens then $E_i$ will hold, but not necessarily the other way around.
 
 If $p_0=p_2$ (no need to consider $p_0=p_1$ as they are dual), then $b=\frac{1}{ac}$, and so, $a=\frac{c^2+c+1}{c}, b=\frac1{c^2+c+1}$, along with $c^5+c^4+c^3+c^2+1=0$. Using these values into $\{E_1,E_3,E_8,E_9\}$, we get
  {\small
 \[
\left\{\frac{c^5+c^3+c^2+c+1}{c^2},   \frac{(c+1)^4}{c^2+c+1},\frac{(c
   +1)^4}{c},\frac{c   (c+1)^2}{c^2+c+1}\right\},
 \]
 }
 and none of these values can be zero (if $c\neq 0,1$ and $c^5+c^4+c^3+c^2+1=0$).
 
 If $p_0=p_3$, then  $b=\frac{1}{ac}$, and so, $a=\frac{c}{c^2+c+1}, b=\frac{c^2+c+1}{c^2}$, along with $ c^5+c^3+c^2+c+1=0$, which used in $E_1, E_2, E_4, E_5, E_6, E_7, E_8$ gives us
  {\small
 \begin{align*}
 &\left\{\frac{c
   \left(c^5+c^4+c^3+c^2+1\right)}{\left(c^2+c+1\right)^2},\frac{(c+1)^4}{c^2+c+1},\frac{(c+1)^3}{c^2+c+1},\right.\\
   &\qquad\qquad\qquad \left.\frac{c+1}{c^2},\frac{(c+1)^3}{c},\frac{(c+1) \left(c^3+c+1\right)}{c^3},0\right\}.
   \end{align*}
   }
   We see that the only possibility is for $p_0=p_3=p_8$, and we can find $x,y$ values such that $p_0=p_3=p_8=p_{10}$ (under  $c^5+c^3+c^2+c+1=0$, $n\equiv 0\pmod 5$) in the following way.
   The solutions $x,y$ for $C_{10}$ can be found by taking $b=\frac1{ac}$ and arriving at $a^2 c^3 + ac (1 + c +  c^2 ) x + x^2=0$,
 $a c + (1 + c^2) x + c^2 y=0$, for $E_{10}=E_{10}'=0$. If $\Tr\left(\frac{c}{c^2+c+1}\right)=0$, then there is a solution $x$ of the first equation, and consequently for $y$, using the second linear equation. We argue now why the trace is 0. We simply use the equation $c^5+c^4+c^3+c^2+1=0$ and write it as $\frac{c^2}{(c^2+c+1)^2}=\frac{1}{c^3+c}=\frac{1}{c}+\frac{1}{c+1}+\frac{1}{(c+1)^2}=\frac{1}{c}$. Now, using the reciprocal polynomial of $c^5+c^4+c^3+c^2+1$, we find that $\Tr\left(\frac{1}{c}\right)=0$.
 Thus,   ${_c}\cB_G(a,b)=5$ for $n\equiv 0\pmod 5$. 
 
As an example for this argument, we take $n=5$. The solutions for the $c$-boomerang system  and the  parameters ($c,a,b,x,y)$ of the previous argument ($\alpha$ is a primitive root, corresponding to the primitive polynomial $x^5+x^2+1$)  are (we used SageMath to find these solutions, in the finite field $F_{2^5}$ given by the primitive polynomial $x^5+x^2+1$): 
\begin{align*}
& (\alpha^4 + \alpha^2, \alpha^4, \alpha^3 + \alpha^2, 0, \alpha^4 + 1):\text{ solution from the case $C_0$}\\
&(\alpha^4 + \alpha^2, \alpha^4, \alpha^3 + \alpha^2, \alpha^3 + \alpha^2 + \alpha, \alpha^4 + 1); \text{ solution from the case $C_{10}$}\\
&(\alpha^4 + \alpha^2, \alpha^4, \alpha^3 + \alpha^2, \alpha^4 + \alpha + 1, \alpha^3 + \alpha^2 + 1); \text{ solution from the case $C_{10}$}\\
& (\alpha^4 + \alpha^2, \alpha^4, \alpha^3 + \alpha^2, \alpha + 1, \alpha); \text{ solution from the case $C_3$}\\
&(\alpha^4 + \alpha^2, \alpha^4, \alpha^3 + \alpha^2, \alpha^3 + \alpha + 1, \alpha^4 + \alpha^3 + \alpha + 1); \text{ solution from the case $C_8$}.
\end{align*}

   If $p_0=p_4$, then $a=\frac{1}{c^2},b=c$, which when used in $\{E_6,E_8,E_9\}$ finds the values
    {\small
   \[
   \left\{\frac{(c+1)^5}{   c^3},\frac{(c+1)^3}{c^2},c (c+1)^2\right\},
   \]
   }
   and none of these will work.
   If $p_0=p_6$, then $1 + c^2 + c^3 + a (1 + c + c^2)=0$. So, if $c^2+c+1=0$, then $c^3+c^2+1=0$, and so $c=0,1$, which is not allowed.Thus, 
   $a=\frac{c^3+c^2+1}{c^2+c+1},b=\frac{c^2+c+1)}{c(c^3+c^2+1)}$ and $E_0=0$ implies $ c^5+c^4+c^2+c+1=0$ (thus, $n\equiv 0\pmod 5$), which used in $\{ E_8, E_9\}$ gives us 
    {\small
   \[
\left\{(c+1)^3,\frac{c^2 (c+1)^2}{c^3+c^2+1}\right\},
   \]
   }
   and surely, none will work.
   
   If $p_0=p_7$, then $a=\frac{1}{c^3+c^2+1}$ ($c^3+c^2+1\neq 0$, since $E_4=0$ is equivalent to $1 + a (c^3+c^2+1)=0$), $b=\frac{c^3+c^2+1}{c}$ and $c^5+c^4+c^3+c+1=0$. The values used in $\{E_8, E_9\}$ gives us
 {\small
 \[
  \left\{\frac{(c+1   ) \left(c^3+c+1\right)}{c^3+c^2+1},c^3   (c+1)\right\},
 \]
 }
   and none of these values can be zero.
   
   If $p_0=p_8$, then $a^2 c^2 + a (c^2+c+1)+1=0, c + a (c^2+c+1)=0$, $b=\frac{1}{ac}$. If $c^2+c+1=0$, then $a=\frac{1}{c},b=1$. When used in $E_9=0$, we get
   $
c (c+1)=0,
   $
   which is not allowed.
   Thus $c^2+c+1\neq 0$. Then $a=\frac{c}{c^2+c+1},b=\frac{c^2+c+1}{c^2}$ and $c^5+c^3+c^2+c+1=0$ (thus, $n\equiv 0\pmod 4$), which put into $E_9=0$, gives us
   $
(c+1)^2=0.
   $
  There is no need to check for further equalities involving $p_0$.
 
 If $p_1=p_2$, then $b=\frac{c}{a}$ and $E_1=0,E_2=0$ become
 $a^2 + c^2 + a (1 + c + c^2)=0$, $c + a (1 + c + c^2)=0$. Thus, $a=\frac{c}{c^2+c+1},b=c^2+c+1$, and $E_1=0$ is transformed into $c^5+c^4+c^3+c^2+1=0$ (which can always happen for some $c$ if and only if $n\equiv 0\pmod 5$:  that is easy to see since the underlying irreducible polynomial $ x^5+x^4+x^3+x^2+1$ is a divisor of $x^{2^n-1}+1$ if and only if $n\equiv 0\pmod 5$). The values $a=\frac{c}{c^2+c+1},b=c^2+c+1$ put back into $E_3,E_4,E_6,E_8,E_9$ (we use here the fact that $p_0$ and $p_1,p_5,p_7$ cannot be part of a winning ``quintuple'', via the prior discussion) renders
 {\small
\[
\left\{(c+1)^2,(c+1)^3,\frac{c   (c+1)^3}{c^2+c+1},\frac{(c+1)^4}{c^2+c+1},0\right\}.
\]
}
 Thus $p_1=p_2=p_9$ and we can find $(x,y)$, as before, such that $p_1=p_2=p_9=p_{10}$, under $n\equiv 0\pmod 5$.

If $p_1=p_3$, then $a=1,b=c$, which when used in $E_4,E_6,E_8,E_9$ gives $c=1$.
If $p_1=p_4$, then $a=\frac{c^3+c+1}{c(c^2+c+1)}$ (it is easy to see that $c^2+c+1\neq 0$) and $b=\frac{c^2(c^2+c+1)}{c^3+c+1}$ and $c^5+c^4+c^3+c+1=0$.  Used in $E_6,E_9$ renders $c=1$, which is not allowed. If $p_1=p_5$, then $a=\frac{c^3}{c^3+c+1}, b=\frac{c^3+c+1}{c^2}$  and $c^5+c^4+c^2+c+1=0$. Further, using these values in $\{E_6,E_8,E_9\}$, gives
{\small
\[
\left\{\frac{c^2 (c+1)   \left(c^2+c+1\right)}{c^3+c+1},\frac{(c+1)
   \left(c^4+c^3+c^2+c+1\right)}{c^3+c+1},\frac{   (c+1) \left(c^3+c^2+1\right)}{c^2}\right\},
\]
}
but these values cannot be zero under our conditions.
If $p_1=p_6$, then  $a=c^2,b=\frac{1}{c}$, but these values, in $E_7,E_8,E_9$ will not work. If $p_1=p_7$, then $a=c,b=1$, which also imply that $E_8=c+1,E_9=c(c+1)$, and consequently, nonzero. If $p_1=p_8$, then $a=\frac{c^2+c+1}{c}, b=\frac{c^2}{c^2+c+1}$, which is not going to vanish $E_9$.
 
We now assume that $p_2=p_3$. Then, by adding $(a+1)E_2$ and $aE_3$, we get $a + a^2 + a c + a^2 c + c^2 + b (a c + a^2 c + a^3 c)=0$. If $1 + a  + a^2 =0$, then we need $a + a^2 + a c + a^2 c + c^2=0$, that is, $c^2+c+1=0$. Thus, either $a=c$ or $a=\frac{1}{c}$. If $a=c$, and $c^2+c+1=0$ (thus, $n\equiv 0\mod 2$), then $E_2=E_3=0$, gives $1 + c^2 + b c^3 + b^2 c^3 =1 + c + b c + b^2 (c^2 + c^3)=0$.
 If further $p_2=p_3=p_4$, then adding $E_2+E_4=0$ gives $b=1$, which when used in $E_2$ implies $c=1$. 
If $p_2=p_3=p_5$, then, adding $E_2=0,E_5=0$ renders $c=0$, which is impossible.
If $p_2=p_3=p_6$, then adding $E_2,E_6$ gives $b=\frac{1}{c^2}$, which, when used in either $E_2,E_6$ gives $c=1$. 
If $p_2=p_3=p_7$, then $E_3 + c E_7=0$ renders $b=0$, which when used back into the expressions $E_2,E_3,E_7$ renders $c=0,1$.
Surely, $p_2$ cannot equal $p_8$, because of the duality.
There is no need to check further equalities containing $p_2=p_3$, under the assumption $a=c$ and $c^2+c+1=0$. Now, if $c^2+c+1=0$ and $a=\frac{1}{c}$, $p_2=p_3=p_4$ gives $b^2+b+c=0$, which when used in $E_4$ implies $b=\frac{1}{c}$, but then $c=0,1$, from $E_4$. In a similar fashion, $p_2=p_3=p_5$, $p_2=p_3=p_6$, $p_2=p_3=p_7$, $p_2=p_3=p_8$, lead to some contradiction.  There is no need to check further equalities containing $p_2=p_3$, under the assumption $a=\frac{1}{c}$ and $c^2+c+1=0$. We now assume that $a^2+a+1\neq 0$, and so, $b=\frac{a + a^2 + a c + a^2 c + c^2 }{a c (1+a+a^2)}$. If $p_2=p_3=p_4$, then $a=1$, which when used back into $E_1$ gives $c=0,1$. There is nothing tricky about the rest of the cases containing $p_2$, so we move on. Similarly, for $p_3=p_4$, $p_3=p_5$.

We assume now $p_3=p_6$. Adding $a E_3+(a+1)E_6$ gives $b=\frac{a + a^2 + a c + a^2 c + c^2}{a (1 + a^2 + a c)}$. Continuing to add the other possible cases, $p_7$, etc., and based upon observations, we can take $a=c+1,b=\frac{c^2+c+1}{c+1}$, where $1 + c + c^2 + c^3 + c^4=0$ (which can always happen for some $c$ when $n\equiv 0\pmod 4$), which will give us five solutions with $p_3=p_6=p_7=p_{10}$,  and so ${_c}\cB_G(a,b)=5$ for $n\equiv 0\pmod 4$.
 In a similar fashion, taking $a=b=c+1$, and $c$ such that $c^3+c^2+1=0$ (thus, $n\equiv 0\pmod 3$), we see that $p_3=p_7=p_8=p_{10}$, so ${_c}\cB_G(a,b)=5$ for $n\equiv 0\pmod 3$.
 With the same approach, with $a=\frac{c+1}{c}, b=\frac{c^2+c+1}{c(c+1)}$, and  $c^4+c^3+c^2+c+1=0$ (so, $n\equiv 0\pmod 4$), then $p_4=p_5=p_9=p_{10}$, so ${_c}\cB_G(a,b)=5$ in this case. 
 The last claims of Theorem~\ref{thmBCT} on the bound being attained are  shown.

 \subsection{Some computational data}
 
 Below, we will compute the maximum $c$-boomerang uniformity of the $(0,1)$-swapped binary inverse function in the fields $\F_{2^2}, \F_{2^3}, \F_{2^4}, \F_{2^5}$, determined by the primitive polynomials $X^2+X+1$, $X^3+X+1$, $X^4+X+1$, respectively, $X^5+X^2+1$, each of some primitive root $\alpha$.
 
 The values of $(c,a,b,x,y)$ achieving the  (maximum)  $c$-boomerang uniformity of $1$ for $n=2$ are:
   \allowdisplaybreaks
 {\footnotesize
 \begin{align*}
 & (\alpha, \alpha, 0,\alpha, 0), (\alpha, \alpha, \alpha, \alpha, \alpha), (\alpha, \alpha, \alpha + 1, \alpha, \alpha + 1), (\alpha, \alpha, 1, \alpha, 1), (\alpha, \alpha + 1, 0, 0, \alpha + 1),\\ &(\alpha, \alpha + 1, \alpha, 0, 1), (\alpha, \alpha + 1, \alpha + 1, 0, 0), (\alpha, \alpha + 1, 1, 0, \alpha), (\alpha, 1, 0, \alpha + 1, \alpha), \\
 & (\alpha, 1, \alpha, \alpha + 1, 0), (\alpha, 1, \alpha + 1, \alpha + 1, 1), (\alpha, 1, 1, \alpha + 1, \alpha + 1), (\alpha + 1, \alpha, 0, 0, \alpha), \\
 & (\alpha + 1, \alpha, \alpha, 0, 0), (\alpha + 1, \alpha, \alpha + 1, 0, 1),  (\alpha + 1, \alpha, 1, 0, \alpha + 1), (\alpha + 1, \alpha + 1, 0, \alpha + 1, 0), \\
 & (\alpha + 1, \alpha + 1, \alpha, \alpha + 1, \alpha),  (\alpha + 1, \alpha + 1, \alpha + 1, \alpha + 1, \alpha + 1), (\alpha + 1, \alpha + 1, 1, \alpha + 1, 1), \\
 & (\alpha + 1, 1, 0, \alpha, \alpha + 1), (\alpha + 1, 1, \alpha, \alpha, 1), (\alpha + 1, 1, \alpha + 1, \alpha, 0), (\alpha + 1, 1, 1, \alpha, \alpha).
 \end{align*}
 }
 
  The values of $(c,a,b,x,y)$ achieving the (maximum)  $c$-boomerang uniformity of $4$ for $n=3$ are:
    \allowdisplaybreaks
  {\footnotesize
  \begin{align*}
   &  \text{for } (c,a,b)= (\alpha, \alpha^2 + 1, 0),\text{ the solutions $(x,y)$ are}:\\
 & ( 0, \alpha^2 + 1), (\alpha, 0), (\alpha^2 + \alpha, \alpha + 1), (\alpha^2 + 1, \alpha^2 + \alpha + 1), \\
 &  \text{for } (c,a,b)= (\alpha^2, \alpha^2 + \alpha + 1, 0),\text{ the solutions $(x,y)$ are}:\\
 & (0, \alpha^2 + \alpha + 1), (\alpha, \alpha^2 + 1), ( \alpha^2, 0), (\alpha^2 + \alpha + 1, \alpha + 1);\\
 &  \text{for } (c,a,b)= (\alpha + 1, \alpha + 1, 0),\text{ the solutions $(x,y)$ are}:\\
 &  (0, \alpha^2 + \alpha), ( \alpha + 1, 0), (\alpha^2 + \alpha + 1, \alpha^2), (\alpha^2 + 1, \alpha + 1)\\
 &  \text{for } (c,a,b)= (\alpha^2 + \alpha, \alpha + 1, 0),\text{ the solutions $(x,y)$ are}:\\
 &  ( 0, \alpha + 1), (\alpha^2, \alpha^2 + \alpha + 1), (\alpha + 1, \alpha^2 + 1), (\alpha^2 + \alpha, 0); \\
 &  \text{for } (c,a,b)= (\alpha^2 + \alpha + 1, \alpha^2 + \alpha + 1, 0),\text{ the solutions $(x,y)$ are}:\\
 & ( 0, \alpha^2),  (\alpha + 1, \alpha^2 + \alpha + 1), (\alpha^2 + \alpha + 1, 0),  (\alpha^2 + 1, \alpha);\\
 &  \text{for } (c,a,b)= (\alpha^2 + 1, \alpha^2 + 1, 0),\text{ the solutions $(x,y)$ are}:\\
& ( 0, \alpha), (\alpha + 1, \alpha^2 + \alpha),  (\alpha^2 + \alpha + 1, \alpha^2 + 1), (\alpha^2 + 1, 0).
  \end{align*}
  }

  The values of $(c,a,b,x,y)$ achieving the (maximum) $c$-boomerang uniformity of $5$ for $n=4$ are:
  \allowdisplaybreaks
 {\footnotesize
  \begin{align*}
& \text{for }(c,a,b)=(\alpha^3, \alpha^3 + \alpha^2 + \alpha, \alpha^3 + \alpha^2), \text{ the solutions $(x,y)$ are}: \\
& (0, \alpha^3 + \alpha^2 + 1), (\alpha, \alpha^3 + \alpha^2 + \alpha + 1), (\alpha^2, \alpha + 1), (\alpha^3 + \alpha^2 + 1, \alpha^3 + \alpha^2),  (1, \alpha^3 + \alpha); \\
&\text{for } (c,a,b)=(\alpha^3, \alpha^3 + 1, \alpha^3 + \alpha),\text{ the solutions $(x,y)$ are}:\\
&  (\alpha, \alpha + 1), (\alpha^3, \alpha^2 + 1),  (\alpha^3 + \alpha^2, \alpha^3 + \alpha + 1), (\alpha^3 + \alpha, 1), (\alpha^3 + 1, \alpha);\\
& \text{for } (c,a,b)= (\alpha^3 + \alpha^2, \alpha^3 + \alpha + 1, \alpha^3 + \alpha^2 + \alpha + 1),\text{ the solutions $(x,y)$ are}:\\
& (0, \alpha^3 + \alpha^2 + \alpha),  (\alpha^2, \alpha^3 + \alpha), (\alpha + 1, \alpha^2 + 1),  (\alpha^3 + \alpha^2 + \alpha, \alpha^3 + \alpha^2 + \alpha + 1), (1, \alpha^3);\\
&  \text{for } (c,a,b)= (\alpha^3 + \alpha^2, \alpha^3 + \alpha^2 + 1, \alpha^3),\text{ the solutions $(x,y)$ are}:\\
 &(\alpha^2, \alpha^2 + 1), (\alpha^3, 1), (\alpha^3 + \alpha^2, \alpha),  (\alpha^3 + \alpha^2 + \alpha + 1, \alpha^3 + 1), (\alpha^3 + \alpha^2 + 1, \alpha^2);\\
 &  \text{for } (c,a,b)= (\alpha^3 + \alpha, \alpha^3 + \alpha + 1, \alpha^3 + \alpha^2 + \alpha + 1),\text{ the solutions $(x,y)$ are}:\\
&   ( \alpha^3, \alpha^3 + \alpha^2 + \alpha),   (\alpha^3 + \alpha + 1, \alpha^2 + 1),  (\alpha^2 + 1, \alpha^2),  (\alpha^3 + \alpha, \alpha + 1),
  (\alpha^3 + \alpha^2 + \alpha + 1, 1); \\
&  \text{for } (c,a,b)= (\alpha^3 + \alpha, \alpha^3 + \alpha^2 + 1, \alpha^3),\text{ the solutions $(x,y)$ are}:\\
& (0, \alpha^3 + 1), (\alpha, \alpha^2), (\alpha^2 + 1, \alpha^3 + \alpha^2),(\alpha^3 + 1, \alpha^3), (1, \alpha^3 + \alpha^2 + \alpha + 1);\\
 &  \text{for } (c,a,b)= (\alpha^3 + \alpha^2 + \alpha + 1, \alpha^3 + \alpha^2 + \alpha, \alpha^3 + \alpha^2), \text{ the solutions $(x,y)$ are}:\\
&  (\alpha + 1, \alpha), (\alpha^3 + \alpha^2, 1),(\alpha^3 + \alpha, \alpha^3 + \alpha^2 + 1),(\alpha^3 + \alpha^2 + \alpha, \alpha + 1), 
(\alpha^3 + \alpha^2 + \alpha + 1, \alpha^2); \\
&  \text{for } (c,a,b)= (\alpha^3 + \alpha^2 + \alpha + 1, \alpha^3 + 1, \alpha^3 + \alpha),\text{ the solutions $(x,y)$ are}:\\
& (0, \alpha^3 + \alpha + 1), (\alpha^3 + \alpha, \alpha + 1, \alpha^3),  (\alpha^3 + \alpha + 1, \alpha^3 + \alpha), ( \alpha^2 + 1, \alpha),(1, \alpha^3 + \alpha^2).
  \end{align*}
  }

   The values of $(c,a,b,x,y)$ achieving the (maximum) $c$-boomerang uniformity of $5$ for $n=5$ are:
  \allowdisplaybreaks
 {\scriptsize
  \begin{align*}
& \text{for }(c,a,b)=(\alpha^3, \alpha^4 + \alpha^3 + \alpha + 1, \alpha + 1), \text{ the solutions $(x,y)$ are}: \\
& (\alpha^2, 0), (\alpha^3, \alpha^4 + \alpha), (\alpha^3 + \alpha, \alpha^4 + \alpha^3 + \alpha), ( \alpha^4 + 1, \alpha^4 + \alpha + 1), (\alpha^4 + \alpha^3 + \alpha + 1, 1); \\
 & \text{for }(c,a,b)=(\alpha^3 + \alpha, \alpha, \alpha^2 + 1), \text{ the solutions $(x,y)$ are}: \\
 & ( \alpha, 1), (\alpha^4, 0), (\alpha^3 + \alpha, \alpha^3 + 1), (\alpha^3 + \alpha^2 + \alpha, \alpha + 1), (\alpha^3 + \alpha^2, \alpha^3);\\
 & \text{for }(c,a,b)=(\alpha^4 + \alpha^2, \alpha^4, \alpha^3 + \alpha^2), \text{ the solutions $(x,y)$ are}: \\
 & ( 0, \alpha^4 + 1), (\alpha^3 + \alpha^2 + \alpha, \alpha^4 + \alpha^3 + \alpha^2 + \alpha + 1), (\alpha^4 + \alpha + 1, \alpha^4 + \alpha^3 + \alpha^2 + \alpha), (\alpha + 1, 1), (\alpha^3 + \alpha + 1, \alpha^4);\\
 & \text{for }(c,a,b)=(\alpha^3 + \alpha^2 + \alpha, \alpha^2, \alpha^4 + 1), \text{ the solutions $(x,y)$ are}: \\
 & (\alpha^2, 1), ( \alpha^3 + \alpha^2 + 1, 0), (\alpha^4 + \alpha^3 + \alpha, \alpha^3 + \alpha), (\alpha^3 + \alpha^2 + \alpha, \alpha^3 + \alpha + 1), (\alpha^4 + \alpha^3 + \alpha^2 + \alpha, \alpha^2 + 1);\\
 & \text{for }(c,a,b)=(\alpha^4 + \alpha^3 + \alpha^2 + 1, \alpha^3 + \alpha^2 + 1, \alpha^4 + \alpha^3 + \alpha), \text{ the solutions $(x,y)$ are}: \\
 & (0, \alpha^3 + \alpha^2), (\alpha^3, \alpha^4 + \alpha + 1), (\alpha^2 + 1, 1), (\alpha^3 + \alpha^2 + \alpha + 1, \alpha^3 + \alpha^2 + 1), (\alpha^4 + \alpha^3 + \alpha^2 + \alpha, \alpha^4 + \alpha);\\
  & \text{for }(c,a,b)=(\alpha^4 + \alpha + 1, \alpha^3 + \alpha^2 + 1, \alpha^4 + \alpha^3 + \alpha), \text{ the solutions $(x,y)$ are}: \\
 & (\alpha, 0), (\alpha^3, \alpha^3 + \alpha^2), (\alpha^2 + 1, \alpha^4 + \alpha^3 + \alpha^2 + \alpha), (\alpha^3 + \alpha^2 + 1, 1), (\alpha^4 + \alpha + 1, \alpha^4 + \alpha^3 + \alpha^2 + \alpha + 1);\\
   & \text{for }(c,a,b)=(\alpha^2 + \alpha, \alpha^2, \alpha^4 + 1), \text{ the solutions $(x,y)$ are}: \\
 &  (0, \alpha^2 + 1), (\alpha^3 + \alpha, \alpha^3 + \alpha^2 + \alpha + 1), (\alpha^4 + \alpha^3 + \alpha, 1), (\alpha^4 + \alpha^3 + \alpha^2 + \alpha, \alpha^3 + \alpha^2 + \alpha), (\alpha^3 + 1, \alpha^2);\\
  & \text{for }(c,a,b)=(\alpha^4 + \alpha^3 + \alpha^2 + \alpha, \alpha^4, \alpha^3 + \alpha^2), \text{ the solutions $(x,y)$ are}: \\
 & (\alpha^4, 1), (\alpha^4 + \alpha^3 + \alpha + 1, 0), (\alpha^4 + \alpha + 1, \alpha^4 + 1), (\alpha + 1, \alpha^3 + \alpha^2 + \alpha), (\alpha^4 + \alpha^3 + \alpha^2 + \alpha, \alpha^3 + \alpha^2 + \alpha + 1); \\
   & \text{for }(c,a,b)=(\alpha^4 + \alpha^3 + 1, \alpha, \alpha^2 + 1), \text{ the solutions $(x,y)$ are}: \\
 &(0, \alpha + 1), (\alpha^3, \alpha^3 + \alpha + 1), (\alpha^3 + \alpha^2 + \alpha, \alpha^3 + \alpha), (\alpha^3 + \alpha^2, 1), (\alpha^4 + \alpha, \alpha); \\
   & \text{for }(c,a,b)=(\alpha^4 + \alpha^2 + \alpha, \alpha^4 + \alpha^3 + \alpha + 1, \alpha + 1), \text{ the solutions $(x,y)$ are}: \\ 
 &(0, \alpha^4 + \alpha^3 + \alpha), (\alpha^3 + \alpha, \alpha^3), (\alpha^4 + 1, 1), (\alpha^4 + \alpha^3 + \alpha^2 + \alpha + 1, \alpha^4 + \alpha^3 + \alpha + 1), ( \alpha^4 + \alpha + 1, \alpha^3 + 1).
  \end{align*}
  }
 
 \end{document}